\definecolor{myred}{RGB}{251,154,133}
\definecolor{myblue}{RGB}{153,206,227}
\definecolor{mylightblue}{RGB}{0, 150, 255}
\definecolor{mygreen}{RGB}{32, 210, 64}
\definecolor{mygray}{RGB}{220, 220, 220}
\tikzset{snake it/.style={decorate, decoration=snake}}
\newtheorem{theorem}{Theorem}
\newtheorem{lemma}{Lemma}[section]
\newtheorem{corollary}{Corollary}
\def\beq{ \begin{equation} }
\def\eeq{ \end{equation} }
\def\square{\vcenter{\vbox{\hrule height .4pt
  \hbox{\vrule width .4pt height 5pt \kern 5pt
        \vrule width .4pt} \hrule height .4pt}}}
\newcommand{\BR}{{\mathbb{R}}}
\newcommand{\bae}{\begin{equation}\begin{aligned}}
\newcommand{\eae}{\end{aligned}\end{equation}}
\DeclareFontFamily{OML}{rsfs}{\skewchar\font'177}
\DeclareFontShape{OML}{rsfs}{m}{n}{ <5> <6> rsfs5 <7> <8> <9>
rsfs7 <10> <10.95> <12> <14.4> <17.28> <20.74> <24.88> rsfs10 }{}
\DeclareMathAlphabet{\mathfs}{OML}{rsfs}{m}{n}
\DeclarePairedDelimiter{\ceil}{\lceil}{\rceil}
\DeclarePairedDelimiter{\floor}{\lfloor}{\rfloor}
\newcommand{\rdb}{\rho_{\text{DB}}}
\newcommand{\note}[1]{{\color{red}{ \bf{ [Note: #1]}}}}
\begin{document}

\title[]{Discrete $\ell^{1}$ Double Bubble solution is at most\\ ceiling +2 of the continuous solution}

\author{Parker Duncan}
\address[Parker Duncan]{Faculty of Industrial Engineering and Management, Technion - Israel Institute of Technology and Department of mathematics, Texas A\&M University}
\email{parkeraduncan@tamu.edu}

\author{Rory O'Dwyer}
\address[Rory O'Dwyer]{Department of Physics, Stanford University}
\email{rodwyer@stanford.edu}

\author{Eviatar B. Procaccia}
\address[Eviatar B. Procaccia\footnote{Research partially supported by NSF grant 1812009 and BSF grant 2018330.}]{Faculty of Industrial Engineering and Management, Technion - Israel Institute of Technology}
\urladdr{https://procaccia.net.technion.ac.il}
\email{eviatarp@technion.ac.il}

\maketitle
\begin{abstract}
In this paper we show that the solution of the discrete Double Bubble problem over $\mathbb{Z}^2$ is at most the ceiling function  plus two of the continuous solution to the Double Bubble problem, with respect to the $\ell^1$ norm, found in  \cite{morgan1998wulff} and \cite{duncan2020elementary}.
\end{abstract}





\section{Introduction}
The Double Bubble problem asks the following: given two volumes, what are the two shapes admitting these volumes with the smallest perimeter, where the perimeter of the joint boundary is counted once. In the Euclidean setting in $\BR^2$ and $\BR^3$ the solution was established in \cite{foisy1993standard} and \cite{doublebubbleconj}. The Euclidean solution is given by three spherical caps whose tangents meet at an angle of 120 degrees. The same solution was shown to be valid under the Gaussian measure in \cite{milman2018gaussian}.

The Double Bubble problem is a generalization of the Isoperimetric problem. Discrete versions of the Isoperimetric problem arise naturally in Probability Theory \cite{alexander1990wulff, biskup2015isoperimetry, bodineau2000rigorous, cerf2006wulff, procaccia2012concentration, procaccia2016quenched}. Alonso and Cerf \cite{alonso1996three} established the solution of the Isoperimetric problem on the three dimensional lattice, proving it is very close to the continuous solution. In this paper we prove that the solution of the perimeter of the discrete Double bubble solution on the square lattice is at most the ceiling plus two of the continuous solution, where the perimeter is taken with respect to the $\ell^1$ norm. While finishing this paper, Friedrich, G{\'o}rny, and Stefanelli \cite{friedrich2021double} have uploaded a paper with mutually exclusive but very related results — mainly showing that different solutions of the discrete Double Bubble problem are close to each other in a geometric sense.

\subsection{Notations and results}

For any Lebesgue-measurable set $A\subset\mathbb{R}^2$, let $\mu(A)$ be its Lebesgue measure.  For a simple curve $\lambda:[a,b]\rightarrow\mathbb{R}^2$, not necessarily satisfying $\lambda(a)=\lambda(b)$, where $\lambda(t)=(x(t),y(t))$, define its $\ell^1$ length by 
$$\rho(\lambda)=\sup_{N\geq1}\sup_{a\leq t_1\leq...\leq t_N\leq b}\sum_{i=1}^N\big(\left|x(t_{i+1})-x(t_i)\big|+\big|y(t_{i+1})-y(t_i)\big|\right).$$  
If we wish to measure only a portion of the curve $\lambda$, it will be denoted $\rho(\lambda
([t,t']))$, where $[t,t']\subset[a,b]$.  For simplicity we assume that $[a,b]=[0,1]$ unless otherwise stated. 

We say that two curves $\lambda,\lambda':[0,1]\rightarrow\mathbb{R}^2$ intersect nontrivially if there are intervals $[s,s'],[t,t']\subset[0,1]$ such that $\lambda([s,s'])=\lambda'([t,t'])$, then their nontrivial intersection can be written as the union of curves $\lambda_i$ such that $\lambda_i([0,1])=\lambda([s_i,s_{i+1}])\rightarrow\mathbb{R}^2$ for some intervals $[s_i,s_{i+1}]$, and we define the length of the nontrivial intersection to be $\rho(\lambda\cap\lambda'):=\sum_i\rho(\lambda_i)$.  \\

Here we are interested in the double bubble perimeter of two simply connected open sets $A,B\subset\mathbb{R}^2$ where the boundary of $A$, $\partial A$, is a closed, simple, rectifiable curve (similarly for $B$), and the intersection of the boundaries of $A$ and $B$ is a union of disjoint, rectifiable curves. Define $\gamma_\alpha$ to be the collection of such sets $(A,B)$ satisfying $\frac{\mu(B)}{\mu(A)}=\alpha$. For any $\alpha\in(0,1]$ and $(A,B)\in\gamma_\alpha$, the double bubble perimeter of $(A,B)$ is defined as $$\rho_{\text{DB}}(A,B)=\rho(\lambda)+\rho(\lambda')-\rho(\lambda\cap\lambda'),$$ where $\partial A=\lambda([0,1])$, and $\partial B=\lambda'([0,1])$.  We will also use the notation $\rho(\lambda)=\rho(\partial A)$. For any $\alpha\in(0,1]$ and $X,Y\in\BR$ satisfying $\frac{Y}{X}=\alpha$, let
$$
\rho_{cont}(X,Y)=\inf\{\rho_{DB}(A,B):(A,B)\in\gamma_\alpha,\mu(A)=X,\mu(B)=Y\}
.$$
It has been proved in \cite{duncan2020elementary,morgan1998wulff} that

\begin{equation}
\rho_{cont}(X,Y)=\left\{\begin{array}{ll}
4\sqrt{X+Y}+2\sqrt{Y}& \text{ for } 0<\alpha\le\alpha_0\\
4\sqrt{X}+2\sqrt{2Y}&\text{ for } \alpha_0\le \alpha\le \frac{1}{2}\\
  2\sqrt{6(X+Y)} & \text{ for } \frac{1}{2}\le\alpha\le 1,
\end{array}\right.
\end{equation}
where $\alpha_0=\frac{688-480\sqrt{2}}{49}$.

The minimizing double bubble shapes for $\alpha=\frac{Y}{X}$ in the continuous case are given by:

\begin{figure}[H]
\begin{tikzpicture}[scale=0.7]
\draw[blue,  thin] (0,4) to (4,4);
\draw[blue,  thin] (0,0) to (4,0);
\draw[blue,  thin] (0,0) to (0,4);
\draw[blue,  thin] (4,0) to (4,4);
\draw[blue, thin] (3,3) to (3,4);
\draw[blue, thin] (3,3) to (4,3);
\draw (2,0.35) node{$\sqrt{X+Y}$};
\draw (1.05,1.8) node{$\sqrt{X+Y}$};
\draw (2.5,3.5) node{$\sqrt{Y}$};
\draw (3.4,2.65) node{$\sqrt{Y}$};

\draw (2,-0.5) node{$\alpha\in(0,\frac{688-480\sqrt{2}}{49}]$};
\draw[blue,  thin] (5,4) to (9,4);
\draw[blue,  thin] (5,0) to (9,0);
\draw[blue,  thin] (5,0) to (5,4);
\draw[blue,  thin] (9,0) to (9,4);
\draw (7,0.35) node{$\sqrt{X}$};
\draw (5.5,1.8) node{$\sqrt{X}$};
\draw (7,-0.5) node{$\alpha\in [\frac{688-480\sqrt{2}}{49},0.5]$};
\draw[blue,  thin] (9,1) to (10.5,1);
\draw[blue,  thin] (9,3) to (10.5,3);
\draw[blue,  thin] (10.5,1) to (10.5,3);
\draw (9.88,1.8) node{$\sqrt{2Y}$};
\draw (9.80,3.5) node{$\frac{\sqrt{2Y}}{2}$};

\draw[blue,  thin] (11,4) to (14.5,4);
\draw[blue,  thin] (11,0) to (14.5,0);
\draw[blue,  thin] (11,0) to (11,4);
\draw[blue,  thin] (15,0) to (15,4);
\draw[blue,  thin] (14.5,4) to (18,4);
\draw[blue,  thin] (14.5,0) to (18,0);
\draw[blue,  thin] (18,4) to (18,0);
\draw (12.15,1.8) node{$\sqrt{\frac{2(X+Y)}{3}}$};
\draw (12.8,3.44) node{$X\sqrt{\frac{3}{2(X+Y)}}$};
\draw (16.5,3.44) node{$Y\sqrt{\frac{3}{2(X+Y)}}$};
\draw (14,-0.5) node{$\alpha\in[0.5,1]$};

\end{tikzpicture}
\caption{\label{fig:Figure4}}
\end{figure}


In this paper we want to constrain ourselves to the discrete problem.  That is, the boundaries of the sets in which we are interested should always have at least one coordinate that is an integer, i.e. if $a\in\partial A$, and $a=(a_x,a_y)\in\mathbb{R}^2$, then either
 $a_{x}\in\mathbb{Z}$ or $a_{y}\in\mathbb{Z}$, similarly for $\partial B$. This constrains the figures so that they lie on the square lattice. We denote this collection of sets lying on the square lattice by $\mathfs{L}$.\\



For $n,m\in\mathbb{Z}$ such that $\alpha=\frac{m}{n}$, define: $$\gamma_{n,m}=\{(A,B)\in\gamma_\alpha:A,B\in\mathfs{L}\text{ with }\mu(A)=n,\mu(B)=m\}.$$

Finally because for any $(A,B)\in\gamma_{n,m}$, $\rdb(A,B)\in\mathbb{Z}^+$, $\Gamma_{n,m}=\text{arginf}\{\rdb(A,B):(A,B)\in\gamma_{n,m})\}$ exists. The focus of this paper will to be to study this set. As a final note, in order to emphasize the role of the continuous double bubble solution in our analysis, we will always use $\rho_{cont}(X,Y)$ for the continuous minimum for the volumes X and Y and use the notation $\rho_{DB}(\Gamma_{n,m})$, for the discrete case. 


In this paper we will obtain the following theorems:

\begin{theorem}
For $0<\alpha\le1/2$, $m,n\in\mathbb{Z}^+$ such that $m/n=\alpha$, 
$$\lceil\rho_{cont}(n,m)\rceil\le\rdb (\Gamma_{n,m})\le\lceil\rho_{cont}(n,m)\rceil+2.$$
\label{theorem:Thm1}
\end{theorem}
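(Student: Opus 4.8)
The plan is to establish the two inequalities separately; the left-hand bound is essentially immediate, while the right-hand bound requires an explicit discretization of the continuous minimizers. For the lower bound, I would note that every lattice pair $(A,B)\in\gamma_{n,m}$ is in particular an admissible \emph{continuous} competitor with $\mu(A)=n$ and $\mu(B)=m$, so $\rho_{DB}(A,B)\ge\rho_{cont}(n,m)$ by the very definition of the continuous infimum. Since $\rho_{DB}$ is integer-valued on $\gamma_{n,m}$, any minimizer satisfies $\rho_{DB}(\Gamma_{n,m})\ge\lceil\rho_{cont}(n,m)\rceil$.

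For the upper bound I would discretize the two minimizing shapes of Figure~\ref{fig:Figure4}, treating the regimes $0<\alpha\le\alpha_0$ and $\alpha_0\le\alpha\le\tfrac12$ separately. In the first regime, take $A\cup B$ to be a \emph{quasi-square} of area $n+m$ (a $\lfloor\sqrt{n+m}\rfloor$-sided square together with at most one partial extra row and column), whose boundary has $\ell^1$-length equal to the discrete isoperimetric value $2\lceil 2\sqrt{n+m}\rceil$, and place $B$ in one corner as a monotone staircase (Young-diagram) region of area exactly $m$ with bounding box $p\times q$. Because the $\ell^1$-length of a monotone staircase equals the sum of its extents, the interface $\partial A\cap\partial B$ has length $p+q$, $A$ is the remaining simply connected region of area $n$, and $\rho_{DB}(A,B)=2\lceil 2\sqrt{n+m}\rceil+(p+q)$. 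In the second regime, take $A$ to be a quasi-square of area $n$ with a flat right side and attach there a region $B$ of area $m$ obtained from a $w\times h$ bounding box by deleting a top-right corner staircase; then $B$ has a flat left interface of length $h$ and rectilinear perimeter $2(w+h)$, giving $\rho_{DB}(A,B)=2\lceil 2\sqrt n\rceil+(2w+h)$.

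The estimate is then driven by two facts. First, the discrete isoperimetric identity that a quasi-square of integer area $N$ has boundary length $2\lceil 2\sqrt N\rceil$. Second, an interface lemma: by AM--GM one has $p+q\ge 2\sqrt{pq}$ and $2w+h\ge 2\sqrt{2wh}$, so over integer boxes with $pq\ge m$ (resp. $wh\ge m$) the interface cost is at least $\lceil 2\sqrt m\rceil$ (resp. $\lceil 2\sqrt{2m}\rceil$), and these values are achieved by choosing the box near the continuous optimum, the exact area $m$ being realized by tuning the staircase. This yields configurations with $\rho_{DB}=2\lceil a\rceil+\lceil c\rceil$, where $\rho_{cont}(n,m)=2a+c$ with $(a,c)=(2\sqrt{n+m},\,2\sqrt m)$ in the first regime and $(a,c)=(2\sqrt n,\,2\sqrt{2m})$ in the second. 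Writing $\delta(x)=\lceil x\rceil-x\in[0,1)$, the gain is controlled by the identity
$$2\lceil a\rceil+\lceil c\rceil-\lceil 2a+c\rceil=2\delta(a)+\delta(c)-\delta(2a+c)\in(-1,3),$$
and since the left-hand side is an integer it is at most $2$, giving $\rho_{DB}(\Gamma_{n,m})\le\lceil\rho_{cont}(n,m)\rceil+2$. I emphasize that it is crucial here to pay only $\lceil 2\sqrt m\rceil$ (not $2\lceil\sqrt m\rceil$) for the interface; the weaker bound would lose an extra unit and only yield $+3$.

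The main obstacle I expect is the construction of the interface so that it simultaneously (i) encloses exactly the prescribed area $m$, (ii) has minimal $\ell^1$-length equal to $\lceil 2\sqrt m\rceil$ resp. $\lceil 2\sqrt{2m}\rceil$, and (iii) fits geometrically into the ambient shape, namely the corner staircase lying inside the big quasi-square and the notched rectangle attaching along a flat side of $A$ of length at least $h$. Verifying the achievability half of the interface lemma together with exact-area tunability, uniformly in $n,m$ and in particular in the small cases where $\sqrt{2m}$ is comparable to $\sqrt n$ and the quasi-square defect interacts with the interface, is the delicate part; once these are in place, the integrality computation closes the argument cleanly.
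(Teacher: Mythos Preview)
Your approach is genuinely different from the paper's and, at the level of the arithmetic skeleton, cleaner. The paper does not use the discrete isoperimetric quasi-square or the identity $2\lceil a\rceil+\lceil c\rceil-\lceil 2a+c\rceil\le 2$. Instead it starts from the continuous minimizer and performs a case analysis on the fractional parts $\delta$ of the relevant side lengths ($\sqrt{n+m}$ and $\sqrt m$ in the first regime, $\sqrt n$ and $\sqrt{2m}$ in the second): if $\delta<1/2$ one side is floored and the opposite ceiled, if $\delta\ge 1/2$ both are ceiled (Lemmas~2.2--2.3); a further four-way split on the parity of $\lfloor\sqrt{2m}\rfloor$ handles the $1\times 2$ rectangle $B$ in regime two (Lemma~2.7). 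Each rounding costs at most $3$ in raw perimeter, and integrality of the final value converts $+3$ into $\lceil\cdot\rceil+2$; any surplus area is then trimmed by removing unit squares (Lemma~2.4). Your route replaces all of this casework by one inequality, at the price of needing the sharper interface cost $\lceil 2\sqrt m\rceil$ (resp.\ $\lceil 2\sqrt{2m}\rceil$) rather than $2\lceil\sqrt m\rceil$.

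The gap you flag is real and is exactly where the work lies. Two things must be nailed down. First, achievability of the interface values: you need integers $p,q$ with $p+q=\lceil 2\sqrt m\rceil$ and $pq\ge m$, and integers $w,h$ with $2w+h=\lceil 2\sqrt{2m}\rceil$ and $wh\ge m$. Both follow from a short parity argument (for $s=\lceil 2\sqrt m\rceil$ one checks $\lfloor s^2/4\rfloor\ge m$, and similarly for $t=\lceil 2\sqrt{2m}\rceil$ one checks the integer maximum of $w(t-2w)$ is at least $m$ by splitting on $t\bmod 4$), but this is not automatic and should be written out. Second, and more delicate, the geometric fit: in regime~2 the attached block needs a flat side of $A$ of length $h$, and at $\alpha=1/2$ one has $\sqrt{2m}=\sqrt n$, so $h$ can equal or exceed the short side $\lfloor\sqrt n\rfloor$ of the quasi-square unless you choose the near-square option $(w,h)\approx(\sqrt{m/2},\sqrt{2m})$ and orient the quasi-square's defect away from the interface; in regime~1 you likewise need $p,q\le\lfloor\sqrt{n+m}\rfloor$, which is fine for $\alpha\le\alpha_0$ once you pick $p,q$ near $\sqrt m$ but again requires the defect of the outer quasi-square to sit in the opposite corner from $B$. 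None of this is hard, but it is not yet in your writeup, and without it the construction is only heuristic. The paper sidesteps these placement issues entirely because it never leaves the rectangular template: it rounds the actual sides of the continuous minimizer and then trims, so compatibility of the two pieces is inherited from the continuous picture.
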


For the case of $\alpha=1$ one obtains a better bound.
\begin{theorem}\label{thm:alpha1}
Let $\tilde{n}\in\mathbb{R}^+$ such that $\tilde{n}>6000$. Then 
$$\lceil\rho_{cont}(\tilde{n},\tilde{n})\rceil\le\rdb (\Gamma_{\lceil\tilde{n}\rceil,\lceil\tilde{n}\rceil})\le\lceil\rho_{cont}(\tilde{n},\tilde{n})\rceil+1.$$
\end{theorem}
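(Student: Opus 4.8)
\emph{Plan.} The lower bound is the easy direction. Any $(A,B)\in\gamma_{\lceil\tilde n\rceil,\lceil\tilde n\rceil}$ is in particular a continuous admissible pair with $\mu(A)=\mu(B)=\lceil\tilde n\rceil\ge\tilde n$, so $\rdb(A,B)\ge\rho_{cont}(\lceil\tilde n\rceil,\lceil\tilde n\rceil)\ge\rho_{cont}(\tilde n,\tilde n)$ since $\rho_{cont}$ is nondecreasing in the volumes. As every discrete double bubble perimeter is a positive integer, $\rdb(\Gamma_{\lceil\tilde n\rceil,\lceil\tilde n\rceil})\ge\lceil\rho_{cont}(\tilde n,\tilde n)\rceil$. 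For the rest write $N=\lceil\tilde n\rceil$ and recall that for $\alpha=1$ one has $\rho_{cont}(x,x)=2\sqrt{6\cdot 2x}=\sqrt{48x}$, with continuous minimizer two equal rectangles of combined dimensions $\sqrt{3x}\times 2\sqrt{x/3}$.

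The engine of the upper bound is the identity $\rdb(A,B)=\rho(\partial R)+\rho(S)$, where $R=A\cup B$ and $S=\partial A\cap\partial B$ is the shared interface; it follows from $\rho(\partial A)+\rho(\partial B)=\rho(\partial R)+2\rho(S)$. Thus it suffices to exhibit a lattice region $R$ of area $2N$, together with a bisection of $R$ into two area-$N$ pieces by a curve $S$, making $\rho(\partial R)+\rho(S)$ small. I would take $R$ to be a $W\times h$ axis-parallel rectangle with $W$ even and $h=\lceil 2N/W\rceil$, and remove two congruent rectangular ``bites'' of total size $Wh-2N$ from the top-left and top-right corners. Two facts drive the construction: removing a rectangular bite from a corner keeps the region orthogonally convex and \emph{does not change the $\ell^{1}$ perimeter}, so $\rho(\partial R)=2(W+h)$; and, since the bites sit in the corners and $W$ is even, the vertical segment $S=\{x=W/2\}$ runs through full-height columns, splits $R$ into two halves each of area $(W/2)h-(Wh-2N)/2=N$, and has $\rho(S)=h$. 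Hence this configuration realizes
$$\rdb\le \rho(\partial R)+\rho(S)=2W+3\left\lceil \tfrac{2N}{W}\right\rceil .$$
One checks the bites fit (the removed amount $Wh-2N$ is even and at most $W-2$, so a one-row-high bite stays clear of the central columns), and minor variants --- biting the bottom corners, taking $W$ odd with a one-step interface of length $\rho(S)=h+1$, or using taller bites --- enlarge the set of attainable perimeters, which I will need in order to match parities and to cover the borderline cases.

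The crux is the arithmetic optimization $\min_{W}\big[\,2W+3\lceil 2N/W\rceil\,\big]$ over even $W$ near the continuous optimum $W_{0}=\sqrt{3N}$. Writing $2W+3\lceil 2N/W\rceil=\big(2W+\tfrac{6N}{W}\big)+\tfrac{3\rho_{W}}{W}$ with $\rho_{W}=W\lceil 2N/W\rceil-2N\in[0,W)$, the bracket is minimized at $W=\sqrt{3N}$ with value $\sqrt{48N}$, and by flatness of $x\mapsto 2x+6N/x$ it stays within $O(N^{-1/2})$ of $\sqrt{48N}$ for every even integer $W$ in a window of width $\asymp N^{1/4}$ about $\sqrt{3N}$. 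The whole difficulty is therefore to locate an even $W$ in this window whose rounding penalty $3\rho_{W}/W$ is small, i.e.\ for which $2N\bmod W$ is close to $W$ (equivalently, $W$ nearly divides $2N$); this is what forces a genuine case analysis and is the step I expect to be hardest, since $2N$ may have no convenient divisors near $\sqrt{3N}$.

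I would prove, by a counting/pigeonhole argument over the admissible window together with the control $\sqrt{48N}-\sqrt{48(N-1)}<\tfrac{1}{20}$ afforded by $\tilde n>6000$, that some even $W$ (possibly with the one-step variant) yields $2W+3\lceil 2N/W\rceil\le\lceil\sqrt{48N}\rceil+1$, and moreover that in the delicate ``near-square'' regime --- when an integer lies in $(\sqrt{48(N-1)},\sqrt{48N}]$ and the two-sided constraint pins $\rdb$ to the single admissible value $\lceil\sqrt{48N}\rceil$ --- a suitable $W$ makes the construction \emph{exactly} optimal (for instance $N=6098$, where $W=136$ already attains the forced value $542$). Combining this with the monotonicity estimate $\lceil\sqrt{48N}\rceil\le\lceil\sqrt{48\tilde n}\rceil+1$, valid once $\tilde n>6000$, and with integrality of $\rdb$, gives $\rdb(\Gamma_{N,N})\le\lceil\rho_{cont}(\tilde n,\tilde n)\rceil+1$, which together with the lower bound completes the proof.
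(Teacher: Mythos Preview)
Your lower bound and your construction are both correct: the symmetric $W\times h$ rectangle with corner bites and a central bisector really does produce a discrete double bubble of perimeter exactly $2W+3\lceil 2N/W\rceil$, and the corner bites preserve the $\ell^1$ perimeter as you claim.

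The approach, however, is quite different from the paper's, and the difference is exactly where your gap lies. The paper does \emph{not} optimize over a single parameter. Instead it fixes the target perimeter $P=\lceil 2\sqrt{12n}\rceil+1$ and searches for an integer point $(x,y,z)$ on the affine plane $3z+2(x+y)=P$ satisfying $xz\ge n$ and $yz\ge n$. After projecting to the $xy$-plane and applying a linear change of basis, the feasible set becomes a lens-shaped region bounded by two explicit algebraic curves, and the admissible lattice becomes one of three possible shifts of $\mathbb{Z}^2$. The paper then fits a specific parallelogram inside this region (verifying this once at $n=6000$ and proving the region grows monotonically in $n$) whose geometry forces it to contain a lattice point for any of the three shifts. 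This is a genuinely two-dimensional area argument and needs no arithmetic information about remainders $2N\bmod W$.

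Your scheme is, in the paper's language, the restriction to the one-dimensional slice $x=y=W/2$ (with the odd-$W$ variant giving $|x-y|=1$). The pigeonhole step you defer --- locating an even $W$ in a window of width $\asymp N^{1/4}$ around $\sqrt{3N}$ with $2N\bmod W$ close to $W$ --- is precisely the point at which a one-parameter argument becomes delicate, and you have not carried it out. It is plausible (the fractional part of $2N/W$ advances by roughly $1/3$ as even $W$ increases by $2$ near $\sqrt{3N}$, so heuristically one in three should land in the good range), but turning this into a proof while simultaneously controlling the parity of the achieved perimeter and the $\tilde n$-versus-$N$ discrepancy in your ``near-square regime'' is exactly the case analysis the paper's two-dimensional argument was designed to bypass. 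What your route would buy, if completed, is a more elementary construction with no linear algebra or curve analysis; what it costs is that the hard step --- the one you correctly flag as hardest --- is still entirely in front of you.
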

By bootstrapping Theorem \ref{thm:alpha1} we conclude the possible range of $\alpha$ values.

\begin{corollary}\label{cor:bootstrap}
For any $1/2\le\alpha<1$ and $n,m\in\mathbb{Z}^+$ such that $m/n=\alpha$ and $n>8000$,
$$\lceil\rho_{cont}(n,m)\rceil\le\rdb (\Gamma_{n,m})\le\lceil\rho_{cont}(n,m)\rceil+2.$$
\end{corollary}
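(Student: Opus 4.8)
The plan is to reduce the asymmetric problem to the symmetric one settled in Theorem \ref{thm:alpha1}, exploiting the fact that on the range $1/2\le\alpha\le 1$ the continuous cost $\rho_{cont}(n,m)=2\sqrt{6(n+m)}$ depends only on the total volume $N:=n+m$ and not on the ratio $\alpha$. The lower bound requires no bootstrap: any $(A,B)\in\gamma_{n,m}$ is in particular an admissible competitor for the continuous problem with volumes $(n,m)$, so $\rdb(A,B)\ge\rho_{cont}(n,m)$; since $\rdb$ is integer valued on $\gamma_{n,m}$, we get $\rdb(\Gamma_{n,m})\ge\lceil\rho_{cont}(n,m)\rceil$. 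Thus the whole content lies in the upper bound.

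For the upper bound I would set $\tilde n=N/2$. Since $\alpha\ge 1/2$ forces $m\ge n/2$ and hence $N\ge 3n/2>12000$ whenever $n>8000$, we have $\tilde n>6000$ and Theorem \ref{thm:alpha1} applies, producing an equal-volume discrete double bubble with volumes $\lceil\tilde n\rceil$ apiece and cost at most $\lceil\rho_{cont}(\tilde n,\tilde n)\rceil+1=\lceil 2\sqrt{6N}\rceil+1=\lceil\rho_{cont}(n,m)\rceil+1$. I would take the underlying near-square configuration behind Theorem \ref{thm:alpha1}: up to deleting a single convex corner cell (which leaves the $\ell^1$ perimeter of a rectangle unchanged and simultaneously corrects the parity of $N$), its outer region $R$ is a rectangle of area exactly $N$ bisected by an interior wall spanning the height $H$ of $R$, and for such configurations the double bubble perimeter decomposes as $\rdb=\rho(\partial R)+\rho(\text{wall})$.

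The key step is then to reposition the interior wall so that the two sides of $R$ have areas exactly $n$ and $m$, while controlling the extra length. Sliding a straight vertical wall across $R$ changes neither $\rho(\partial R)$ nor the wall length, and realizes every left-hand area that is an integer multiple of $H$; writing $n=kH+r$ with $0\le r<H$, I would place the wall at column $k$ and insert a single unit horizontal step at height $r$, so the left region absorbs exactly the remaining $r$ cells and has area exactly $n$ (and the right region area $m$). A monotone staircase with one jog has $\ell^1$ length $H+1$, both pieces stay simply connected with the interface a single rectifiable arc, and since the old wall length is at least $H$ we obtain $\rho(\partial R)\le\lceil\rho_{cont}(n,m)\rceil+1-H$, whence the new configuration costs $\rho(\partial R)+\rho(\text{new wall})\le\lceil\rho_{cont}(n,m)\rceil+1-H+(H+1)=\lceil\rho_{cont}(n,m)\rceil+2$, as claimed.

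The main obstacle is the geometric bookkeeping in this repositioning step, and in particular its dependence on the exact shape delivered by Theorem \ref{thm:alpha1}. The clean slide-plus-one-jog argument wants the outer region to be a genuine rectangle, so that any vertical level can serve as a wall and the remainder $r$ can be absorbed by a single step; if the extremal symmetric configuration is only near-rectangular, one must either verify that its small rectilinear defects do not obstruct a monotone interface of length $\le H+1$ separating areas $n$ and $m$, or else build a fresh near-square rectangle of area $N$ directly and re-establish that its straight-cut double bubble perimeter is at most $\lceil\rho_{cont}(n,m)\rceil+1$ by reusing the dimension estimates underlying Theorem \ref{thm:alpha1}. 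Handling the parity of $N$ and the remainder $r$ at the same time, while keeping both pieces valid double-bubble domains on the lattice, is precisely where the extra unit of slack (the $+2$ rather than $+1$) is spent.
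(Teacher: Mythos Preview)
Your approach is essentially the paper's: reduce to the symmetric case $\tilde n=(n+m)/2$ via Theorem~\ref{thm:alpha1} (using that $\rho_{cont}$ depends only on $n+m$ on this range), then redistribute volume by moving the interior wall with at most one jog, spending at most one extra unit of perimeter. Your lower bound and the numerical check $\tilde n>6000$ match the paper.

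The bookkeeping difficulty you flag in your last paragraph is real, and the paper resolves it by extracting a slightly different object from Theorem~\ref{thm:alpha1} than you do. You take the final symmetric configuration (total area $2\lceil\tilde n\rceil$), which after the volume-trimming step of Lemma~\ref{lemma:Lem2.4} is \emph{not} a clean rectangle, so you are left arguing around the defects and the parity of $N$. The paper instead reaches into the \emph{proof} of Theorem~\ref{thm:alpha1} and pulls out the pre-trimming Figure~\ref{fig:Figure9} configuration: a genuine integer rectangle of height $z$ and width $x+y$, split by a straight vertical wall, with $xz\ge\tilde n$, $yz\ge\tilde n$ (so total area $\ge N$, possibly strictly larger), and $3z+2(x+y)\le\lceil 2\sqrt{6N}\rceil+1$. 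On this clean rectangle your slide-plus-one-jog works without obstruction; one piece ends up with area exactly $m$, the other with area $\ge n$, and Lemma~\ref{lemma:Lem2.4} then trims the excess at no perimeter cost. No parity correction or near-rectangle analysis is needed.
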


Note that one can computationally verify the correctness of the bounds of all values of $n$ smaller than $8000$. This algorithmic part is not immediate and will be a subject of a subsequent paper. 

It is easy to show an example where the ceiling plus two stated in Theorem \ref{theorem:Thm1} and  Corollary \ref{cor:bootstrap} is sharp. Take $(n,m)=(7,4)$. By checking all possible configurations, the optimal discrete double bubble perimeter is $19$ (see Figure \ref{fig:compsolver23}), whereas the continuous case solution has double bubble perimeter $2\cdot\sqrt{6(7+4)}\in(16,17)$;
\begin{figure}[H]
\includegraphics[width=6 cm]{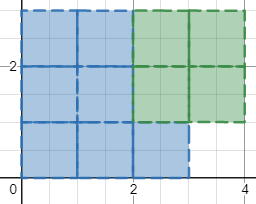}
\caption{\label{fig:compsolver23}}
\end{figure}

With the aide of a computer, one can find the optimal discrete double bubble perimeter for various integer volumes $(n,m)$. In Figure \ref{fig:compsolv}, purple represents solutions where $\rho_{DB}=\lceil\rho_{cont}\rceil$, teal represents $\rho_{DB}=\lceil\rho_{cont}\rceil+1$ and yellow represents $\rho_{DB}=\lceil\rho_{cont}\rceil+2$.
\begin{figure}[H]
\includegraphics[width=6 cm]{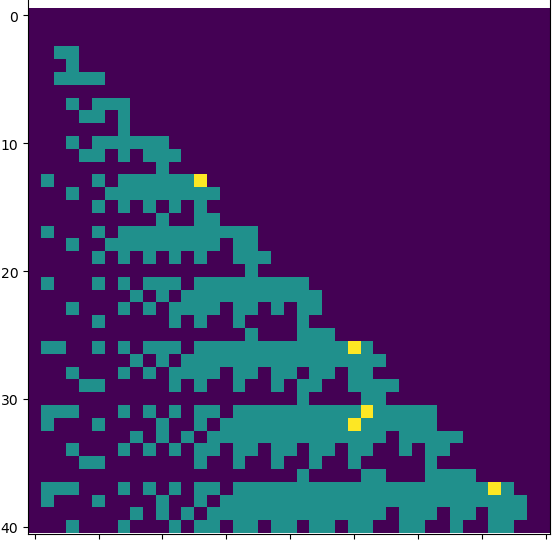}
\caption{\label{fig:compsolv}}
\end{figure}

One can observe that the requirement in Theorem \ref{thm:alpha1} for $n$ to be large enough is essential as one gets ceiling plus two at $(n,m)=(13,13)$. Here $\rho_{DB}$ is 27 where the continuous one is roughly 24.979.

Running the calculation for larger volumes one can see a path to improve upon our results. For some values of $\alpha$, ceiling plus two seems to not occur or at least be very rare. We leave this as an open problem.

\begin{figure}[H]
\includegraphics[width=8 cm]{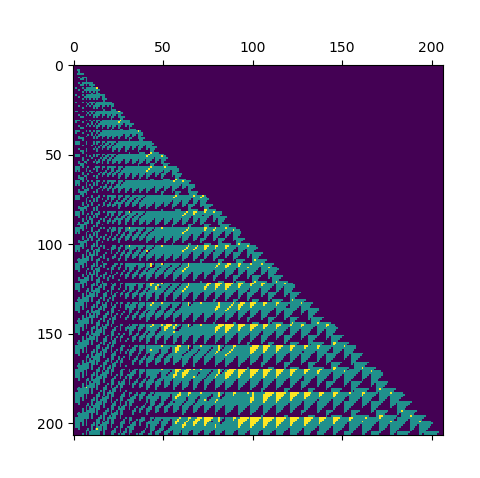}
\caption{\label{fig:compsolv2}}
\end{figure}


Paper Organization: Theorem \ref{theorem:Thm1} is proved in Section \ref{sec:proof1}. The methods used for the proof of Theorem \ref{theorem:Thm1} were too cumbersome to use to obtain the same bound for $\alpha\in(0.5,1]$.  The method we found to obtain this ceiling plus two bound for $\alpha\in(0.5,1]$ was strong enough to give us Theorem \ref{thm:alpha1}, from which Corollary \ref{cor:bootstrap} follows.  These are presented in Section \ref{sec3}.

\section{Proof of Theorem \ref{theorem:Thm1}}\label{sec:proof1}
In this section we say rectangle in $\mathbb{R}^2$ to represent sets of the form $[a,b]\times[c,d]$ with $a<b,c<d$ all in $\mathbb{R}$. A square in $\mathbb{R}^2$ assumes $|b-a|=|d-c|$. 

We prove Theorem \ref{theorem:Thm1} separately for two different cases of the continuous solution.
 
\subsection{Lattice Imposing Method, $\alpha\le\alpha_0$}

In this section we begin the proof of Theorem \ref{theorem:Thm1}; we aim to show through a sequence of lemmas that for $\alpha\leq\alpha_0$ the double bubble minimizer for the discrete case can be no more than two plus the ceiling of the continuous case double bubble minimizer.  To do this, we begin with two integers $m,n$ such that $\frac{m}{n}\leq\alpha_0$, and examine the shapes these two values would produce in the continuous case, and then adjust the lengths of the sides of the rectangles so that all of them are integer length.  We must, while doing this, be careful to not increase the perimeter by too much, of course; we must also ensure that we do not lose too much volume from our two sets.  It will be seen that this procedure may result in two sets whose volumes are greater than necessary, prompting use to develop, in case of this contingency, the most natural method to reduce their volumes to the desired values without increasing double bubble perimeter.  \\

\begin{lemma}

Let $(A,B)\in\Gamma_{\alpha}$ be such that $\mu(A)=n$, $\mu(B)=m$, and $A,B\in\mathfs{L}$, then $(A,B)\in\Gamma_{n,m}$ as well.  \\

\begin{proof}
This follows immediately from the definition of $\Gamma_{n,m}$.  \\
\end{proof}

\end{lemma}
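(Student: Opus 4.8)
The plan is to simply unwind the definition of $\Gamma_{n,m}$ and verify its two constituent requirements in turn: that the pair $(A,B)$ is feasible, i.e.\ $(A,B)\in\gamma_{n,m}$, and that it realizes the infimum $\inf\{\rdb(A',B'):(A',B')\in\gamma_{n,m}\}$. First I would dispose of feasibility. Recall $\gamma_{n,m}=\{(A,B)\in\gamma_\alpha:A,B\in\mathfs{L},\ \mu(A)=n,\ \mu(B)=m\}$, so membership requires exactly four things: that $(A,B)\in\gamma_\alpha$ with $\alpha=m/n$, that both sets lie in $\mathfs{L}$, and that the two volumes equal $n$ and $m$. Three of these are literally among the hypotheses of the lemma, and the remaining one, $(A,B)\in\gamma_\alpha$, is inherited from $(A,B)\in\Gamma_\alpha$ once one notes $\mu(B)/\mu(A)=m/n=\alpha$. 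Hence $(A,B)\in\gamma_{n,m}$ with no work beyond matching the hypotheses against the definition of the feasible class.

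The only substantive point is minimality, and here the engine is the inclusion $\gamma_{n,m}\subseteq\gamma_\alpha$: imposing the two integer volumes $(n,m)$ and the lattice constraint can only shrink the competitor class relative to imposing merely the ratio $\alpha$. Consequently any configuration that minimizes $\rdb$ over the larger class and happens to lie in the smaller one must also minimize over the smaller one. Concretely, writing $I_\alpha$ and $I_{n,m}$ for the two infima, the inclusion gives $I_\alpha\le I_{n,m}$, while $(A,B)\in\gamma_{n,m}$ gives $I_{n,m}\le\rdb(A,B)$; since $(A,B)\in\Gamma_\alpha$ forces $\rdb(A,B)=I_\alpha$, the chain $I_\alpha\le I_{n,m}\le\rdb(A,B)=I_\alpha$ collapses to a string of equalities, so $\rdb(A,B)=I_{n,m}$ and therefore $(A,B)\in\Gamma_{n,m}$.

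I do not expect any genuine obstacle in this lemma: the single fact being invoked is the elementary principle that a minimizer over a superset which belongs to a subset restricts to a minimizer over that subset. The only thing requiring care is bookkeeping the nested defining conditions of $\gamma_\alpha$, $\gamma_{n,m}$, and $\Gamma_{n,m}$ so that each clause in the hypothesis is matched to the correct clause in the definition. This is exactly why the statement should follow immediately from the definition of $\Gamma_{n,m}$, and it serves as the natural bridge allowing later arguments to pass freely between the continuous-ratio viewpoint and the fixed-integer-volume discrete viewpoint used throughout the rest of the section.
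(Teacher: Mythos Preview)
Your argument is correct and is exactly the paper's approach: the paper's proof is the single sentence ``This follows immediately from the definition of $\Gamma_{n,m}$,'' and you have simply spelled out that one-line observation---namely that a minimizer over the larger competitor class which happens to lie in the subclass $\gamma_{n,m}$ is automatically a minimizer over the subclass. The only thing to be careful with is that $\Gamma_\alpha$ (and hence your $I_\alpha$) must be read as the minimizer/infimum at the fixed volumes $(n,m)$, i.e.\ $I_\alpha=\rho_{cont}(n,m)$, since the infimum over all of $\gamma_\alpha$ without a volume constraint would be zero by scaling; this is the reading the paper clearly intends in its later uses of $\Gamma_\alpha$.
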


In light of the previous lemma, we will assume for the remainder of section $2$ that at least one of the sides of either $A$, $B$, or any square or rectangle mentioned is not an integer, unless otherwise stated (here $(A,B)\in\Gamma_{\alpha}$).  \\

\begin{lemma}
\label{lemma:Lem2.2}
Let $S$ be a square in $\mathbb{R}^2$ with volume $V\in\mathbb{N}$, $V>1$, and therefore sides of length $\sqrt{V}$.  Let $\delta=\sqrt{V}-\left\lfloor\sqrt{V}\right\rfloor$ be the decimal part of $\sqrt{V}$.  If $\delta\in[0,0.5)$, then the figure, call it $S'$, obtained by increasing the horizontal sides of $S$ to have length $\left\lceil\sqrt{V}\right\rceil$ and decreasing the vertical sides of $S$ to have length $\left\lfloor\sqrt{V}\right\rfloor$ has perimeter at most $\rho(S)+2$, and is such that $\mu(S')\geq\mu(S)$.

\end{lemma}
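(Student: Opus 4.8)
The plan is to reduce everything to a direct computation with the $\ell^1$ perimeter of axis-aligned rectangles. First I would record that, because every side of $S$ and of $S'$ is parallel to a coordinate axis, the $\ell^1$ length $\rho$ of each side coincides with its ordinary Euclidean length: along a horizontal side only the $x$-coordinate varies, so the supremum in the definition of $\rho$ telescopes to the side's length, and likewise for vertical sides. Hence $\rho$ of any axis-aligned rectangle is simply the sum of its four side lengths. Writing $f=\left\lfloor\sqrt{V}\right\rfloor$ and invoking the standing assumption that $\sqrt{V}$ is not an integer, we have $\left\lceil\sqrt{V}\right\rceil=f+1$ and $\sqrt{V}=f+\delta$ with $\delta\in(0,0.5)$.

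With this notation the perimeter estimate is immediate. The square satisfies $\rho(S)=4\sqrt{V}=4(f+\delta)$, while the modified figure $S'$, having horizontal sides of length $f+1$ and vertical sides of length $f$, satisfies $\rho(S')=2(f+1)+2f=4f+2$. Subtracting gives $\rho(S')-\rho(S)=2-4\delta$, which is at most $2$ since $\delta\ge 0$; this proves $\rho(S')\le\rho(S)+2$.

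The volume claim is where the one genuine idea enters, and it is the step I expect to be the main obstacle. We have $\mu(S')=(f+1)f=f^2+f$, so it suffices to show $V\le f^2+f$. Bounding $V$ through $\delta$ alone is not enough — as $\delta\to 0.5^-$ the quantity $(f+\delta)^2$ can exceed $f^2+f$ for small $f$ — so the essential point is to exploit that $V$ is an integer. Since $\delta<0.5$ we have $\sqrt{V}<f+\tfrac12$, hence $V<\left(f+\tfrac12\right)^2=f^2+f+\tfrac14$; as $V\in\mathbb{N}$ this forces $V\le f^2+f$, and therefore $\mu(S')=f^2+f\ge V=\mu(S)$. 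The only subtlety to watch is exactly this interplay between the constraint $\delta<0.5$ and the integrality of $V$; once it is in place, the remainder is routine bookkeeping with the $\ell^1$ perimeter of rectangles.
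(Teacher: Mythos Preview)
Your proof is correct and follows essentially the same approach as the paper: a direct perimeter comparison, followed by the observation that $\delta<\tfrac12$ together with the integrality of $V$ forces $V\le f(f+1)=\mu(S')$. Your volume step is in fact slightly cleaner than the paper's (which compares $V-\delta^2$ with $V'$ and then invokes integrality of $V'$), but the underlying idea is identical.
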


\begin{proof}

The lemma is trivial if $\delta=0$ because in this case the sides of the square are already integers and taking the ceiling and floor of an integer does not change the value.  So let us assume that $\delta\in(0,0.5)$.  

Decreasing the vertical sides clearly does not increase perimeter.  Since $\delta\in(0,0.5)$, increasing the other two side lengths increases the perimeter by at most $2\cdot(1-\delta)<2$.  

At first glance, it may appear that this procedure will have decreased the volume because we did decrease the length of the vertical sides of the square.  However, notice that the original volume of $S$ was $V=\left\lfloor\sqrt{V}\right\rfloor^2+2\delta\cdot\left\lfloor\sqrt{V}\right\rfloor+\delta^2\in\mathbb{N}$, and $0<\delta^2<1$.  On the other hand, the new set has volume $V'=\left\lfloor\sqrt{V}\right\rfloor\cdot\left\lceil\sqrt{V}\right\rceil=\left\lfloor V\right\rfloor\cdot\left\lfloor V\right\rfloor+(1-\delta)\cdot\left\lfloor\sqrt{V}\right\rfloor+\delta\cdot\left\lfloor\sqrt{V}\right\rfloor\in\mathbb{N}$.  Also notice that since $\delta\in(0,0.5)$, we have that $(1-\delta)\cdot\left\lfloor\sqrt{V}\right\rfloor>\delta\cdot\left\lfloor\sqrt{V}\right\rfloor$.  This gives the following inequality:  
$$V-\delta^2=\left\lfloor\sqrt{V}\right\rfloor^2+2\delta\cdot\left\lfloor\sqrt{V}\right\rfloor<\left\lfloor\sqrt{V}\right\rfloor^2+(1-\delta)\left\lfloor\sqrt{V}\right\rfloor+\delta\left\lfloor\sqrt{V}\right\rfloor=V'$$

Finally, since $V'$ is an integer, we have $V=\left\lceil V-\delta^2\right\rceil\leq V'$.  Thus, our new volume is at least the original volume $V$.

\end{proof}

\begin{lemma}
\label{lemma:Lem2.3}

Let $S$ be a square in $\mathbb{R}^2$ with volume $V\in\mathbb{N}$, $V>1$, and therefore sides of length $\sqrt{V}$.  Let $\delta=\sqrt{V}-\left\lfloor\sqrt{V}\right\rfloor$ be the decimal part of $\sqrt{V}$.  If $\delta\in[0.5,1)$, then the figure, call it $S'$, obtained by increasing all sides of $S$ to have length $\left\lceil\sqrt{V}\right\rceil$ has perimeter at most $\rho(S)+2$, and is such that $\mu(S')\geq\mu(S)$.  \\

\end{lemma}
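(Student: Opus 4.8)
The plan is to mirror the two-part structure of Lemma \ref{lemma:Lem2.2}: first bound the increase in $\ell^1$ perimeter, then verify that the volume does not decrease. Since $S$ is an axis-aligned square of side $\sqrt{V}$, its $\ell^1$ perimeter is $\rho(S)=4\sqrt{V}$, because each of the four sides is axis-parallel and hence has $\ell^1$ length equal to its Euclidean length. The new figure $S'$ is the axis-aligned square of side $\left\lceil\sqrt{V}\right\rceil=\left\lfloor\sqrt{V}\right\rfloor+1$, so $\rho(S')=4\left\lceil\sqrt{V}\right\rceil$.

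First I would control the perimeter. Writing $\sqrt{V}=\left\lfloor\sqrt{V}\right\rfloor+\delta$ gives $\left\lceil\sqrt{V}\right\rceil=\sqrt{V}+(1-\delta)$, hence
$$\rho(S')-\rho(S)=4\big(\left\lceil\sqrt{V}\right\rceil-\sqrt{V}\big)=4(1-\delta).$$
Because $\delta\in[0.5,1)$ we have $1-\delta\in(0,0.5]$, so $\rho(S')-\rho(S)=4(1-\delta)\le 2$, which is exactly the claimed bound $\rho(S')\le\rho(S)+2$. Note that the hypothesis $\delta\ge 0.5$ is used precisely here to force $4(1-\delta)\le 2$; this is the symmetric counterpart of the condition $\delta<0.5$ in Lemma \ref{lemma:Lem2.2}, where the \emph{stretch one pair of sides, shrink the other} strategy was used instead, and where the same hypothesis $\delta < 0.5$ controls the increase $2(1-\delta)$.

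For the volume I would argue even more directly than in Lemma \ref{lemma:Lem2.2}. Here every side is enlarged, so there is no competition between area lost on shortened sides and area gained on lengthened ones; the new square strictly contains a translate of the old one, and $\mu(S')=\left\lceil\sqrt{V}\right\rceil^2\ge(\sqrt{V})^2=V=\mu(S)$, with strict inequality since $\sqrt{V}$ is not an integer when $\delta\in[0.5,1)$. This settles $\mu(S')\ge\mu(S)$.

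I expect no genuine obstacle in this lemma: the only place requiring any care is the perimeter estimate, and even there the computation is a one-line consequence of $\delta\ge 0.5$. The real work of the section lies not in this lemma itself but in later steps, where it must be combined with Lemma \ref{lemma:Lem2.2} and applied to the squares of volume $m+n$ and $m$ appearing in the continuous minimizer of Figure \ref{fig:Figure4}, and where the accumulated perimeter increases and volume surpluses will have to be reconciled with the ceiling-plus-two target of Theorem \ref{theorem:Thm1}.
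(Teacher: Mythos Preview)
Your proof is correct and follows exactly the same approach as the paper: compute the perimeter increase as $4(1-\delta)\le 2$ using $\delta\ge 0.5$, and observe that enlarging all sides trivially increases the volume. The paper's version is just a terser statement of the same two observations.
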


\begin{proof}

That this procedure increases the volume of $S$ is obvious since we are increasing its side lengths.  If we increase the length of each side of $S$ as indicated, we have increased four sides by $1-\delta\leq0.5$, which results in an increase in perimeter of at most $2$.

\end{proof}

The previous two lemmas give us a way to force our sets into $\mathfs{L}$ without increasing the perimeter too much and while also not decreasing the volume.  We have not, however, eliminated the possibility that in the construction provided by the last two lemmas we actually increase volume at some stage.  Our next lemma gives us a way of reducing the volume if we have more than we need after the process given in Lemmas \ref{lemma:Lem2.2} and \ref{lemma:Lem2.3}.  \\

\begin{lemma}
\label{lemma:Lem2.4}
Let $A\in\mathfs{L}$ be a rectangle.  If $l,k\in\mathbb{N}$, $\mu(A)=k>1$, and $l<k$, then we can reduce the volume of $A$ until we have a set $A'\subset A$ such that $\mu(A')=l$, $\rho(\partial A')\leq\rho(\partial A)$, and $A'\in\mathfs{L}$.  

\end{lemma}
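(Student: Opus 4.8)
The plan is to reduce the volume of the rectangle $A$ by shaving off cells in a controlled way that never increases the $\ell^1$ perimeter.  Write $A=[0,p]\times[0,q]$ with $p,q\in\mathbb{N}$ (integers since $A\in\mathfs{L}$ is a rectangle with integer volume $k=pq$ lying on the lattice).  I need to remove $k-l$ unit cells while keeping the result in $\mathfs{L}$ and keeping $\rho(\partial A')\le\rho(\partial A)=2(p+q)$.  The key observation is that removing an entire column (or row) of cells \emph{strictly decreases} perimeter, while removing a partial column by carving a notch from one corner keeps perimeter \emph{unchanged}: if one removes an $a\times b$ rectangular block from a corner of $A$ (with $a<p$, $b<q$), the resulting L-shaped region still has $\ell^1$ boundary length $2(p+q)$, because the two removed outer edges of length $a$ and $b$ are exactly compensated by the two new inner edges of length $a$ and $b$.

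First I would reduce modulo full rows.  Write $k-l=sq+r$ with $0\le r<q$ where $s\ge 0$; removing $s$ full columns of height $q$ leaves a rectangle $[0,p-s]\times[0,q]$ with volume $l+r$ and perimeter $2(p-s+q)\le 2(p+q)$, so the perimeter does not increase (it strictly decreases if $s\ge 1$).  It remains to remove the residual $r$ cells with $0\le r<q$.  If $r=0$ we are done.  Otherwise I carve an $r\times 1$ notch out of a corner: remove the cells $[0,r]\times[q-1,q]$ from the top-left corner.  The resulting region $A'$ is an L-shape, lies in $\mathfs{L}$ (all its corners and edges sit on integer lines since $p-s,q,r\in\mathbb{Z}$), has $\mu(A')=l$, and by the corner-notch observation above has $\rho(\partial A')=2(p-s+q)\le\rho(\partial A)$.

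The main step requiring care is verifying the perimeter bookkeeping for the corner notch, i.e.\ that cutting a lattice-aligned rectangular bite out of a convex corner preserves $\ell^1$ boundary length.  The cleanest way is to trace the boundary of the L-shaped $A'$ directly and sum horizontal and vertical displacements using the definition of $\rho$: the total horizontal travel is still $2(p-s)$ (out along the bottom and back along the top, the top now split into a segment of length $p-s-r$ plus the notch ledge of length $r$) and the total vertical travel is still $2q$ (the notch contributes an extra up-and-down of total length $0$ in net, but the inner vertical edge of length $1$ is offset by shortening the left edge from $q$ to $q-1$).  Summing gives exactly $2(p-s+q)$, confirming no increase.

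The only genuine obstacle is the edge case where the residual notch would need to exceed the available dimensions; but since I arranged $r<q$, the notch $[0,r]\times[q-1,q]$ always fits within one row of the rectangle of width $p-s\ge 1$ (note $r<q\le k=pq$ forces $r<p-s$ is \emph{not} guaranteed, so if $r\ge p-s$ I instead remove whole unit cells one row at a time, peeling a single-height strip of length $p-s$ repeatedly; each such strip removal on the topmost row is again a corner notch and preserves perimeter).  Thus by first stripping full columns and then peeling a single partial top row cell-by-cell from one end, I reach volume exactly $l$ with $A'\in\mathfs{L}$ and $\rho(\partial A')\le\rho(\partial A)$, completing the proof.
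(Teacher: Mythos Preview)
Your proof is correct and follows essentially the same approach as the paper: remove lattice unit squares from one side of the rectangle, observing that a partial column (your ``corner notch'') leaves the $\ell^1$ perimeter unchanged while a full column removal strictly decreases it. The paper's presentation is slightly leaner---it simply peels off one unit square at a time from the bottom of the leftmost column, which sidesteps your edge-case discussion of $r\ge p-s$ entirely---but the underlying idea is identical.
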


\begin{proof}
Consider the following figure:  \\

\begin{figure}[H]

\begin{center}
\begin{tikzpicture}[scale=0.7]
\draw (-2,0) -- (6,0);
\draw (0,-2) -- (0,6);

\draw (-0.2,1) -- (0.2,1);
\draw (-0.2,0.5) -- (0.2,0.5);
\draw (-0.2,1.5) -- (0.2,1.5);
\draw (-0.2,2) -- (0.2,2);
\draw (-0.2,2.5) -- (0.2,2.5);
\draw (-0.2,3) -- (0.2,3);
\draw (-0.2,3.5) -- (0.2,3.5);
\draw (-0.2,4) -- (0.2,4);
\draw (-0.2,4.5) -- (0.2,4.5);

\draw (0.5,-0.2) -- (0.5,0.2);
\draw (1,-0.2) -- (1,0.2);
\draw (1.5,-0.2) -- (1.5,0.2);
\draw (2,-0.2) -- (2,0.2);
\draw (2.5,-0.2) -- (2.5,0.2);
\draw (3,-0.2) -- (3,0.2);
\draw (3.5,-0.2) -- (3.5,0.2);
\draw (4,-0.2) -- (4,0.2);
\draw (4.5,-0.2) -- (4.5,0.2);

\draw [blue, thick] (0,0) -- (0,4);
\draw [blue, thick] (0,4) -- (4,4);
\draw [blue, thick] (4,4) -- (4,0);
\draw [blue, thick] (4,0) -- (0,0);

\draw (2,2.7) node{$A$};
\draw (2,1.8) node{$\mu(A)=k$};

\end{tikzpicture}
\caption{\label{fig:Figure5}}

\end{center}

\end{figure}

We merely have to remove volume one square at a time from the left side of this shape beginning at the bottom until we have volume $l$ remaining.  Only if we remove an entire column of unit squares does the perimeter change, and in that case it decreases instead of increases.  If we do remove an entire column, we continue removing the next column on the left side of this new figure.  In this way, we will eventually reach volume $l$ without increasing perimeter. \\

\end{proof}

Before looking at Lemma \ref{lemma:Lem2.5}, recall that if $m,n\in\mathbb{N}$, and $\frac{m}{n}\leq\alpha_0$, then the continuous double bubble configuration is as follows:  \\

\begin{center}
\begin{figure}[H]
\begin{tikzpicture}[scale=0.7]
\draw[blue,  thin] (0,4) to (4,4);
\draw[blue,  thin] (0,0) to (4,0);
\draw[blue,  thin] (0,0) to (0,4);
\draw[blue,  thin] (4,0) to (4,4);
\draw[blue, thin] (3,3) to (3,4);
\draw[blue, thin] (3,3) to (4,3);
\draw (2,0.35) node{$\sqrt{m+n}$};
\draw (1.05,2) node{$\sqrt{m+n}$};
\draw (2.45,3.5) node{$\sqrt{m}$};
\draw (3.4,2.65) node{$\sqrt{m}$};

\draw (0.3,3.6) node{$A$};
\draw (3.6,3.6) node{$B$};

\end{tikzpicture}
\caption{\label{fig:Figure6}}

\end{figure}
\end{center}

In the following lemma, we will consider $A\cup B$ in the figure above to be a square in its own right, even though it technically doesn't contain the subset of $\mathbb{R}^2$ that is $\partial A\cap\partial B$.

\begin{lemma}
\label{lemma:Lem2.5}
Let $(A,B)\in\Gamma_{\alpha}$ be such that $\mu(A)=n$, $\mu(B)=m$, where $m,n\in\mathbb{N}^+$ are integers such that $\frac{m}{n}\leq\alpha_0$, and $a=\sqrt{n+m}-\left\lfloor\sqrt{n+m}\right\rfloor$, and $b=\sqrt{m}-\left\lfloor\sqrt{m}\right\rfloor$. Then we can find two new sets $(A',B')$ such that $\rho_{DB}(A',B')\leq\left\lceil\rho_{cont}(A,B)\right\rceil+2$, and such that $\mu(A'\cup B')=\mu(A\cup B)$ and $\mu(B')=\mu(B)$.  
\end{lemma}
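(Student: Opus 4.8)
The plan is to realize the desired lattice double bubble by starting from the continuous minimizer of Figure~\ref{fig:Figure6} — the square $A\cup B$ of side $\sqrt{n+m}$ together with the corner square $B$ of side $\sqrt m$ — and forcing both squares onto the lattice with Lemmas~\ref{lemma:Lem2.2} and~\ref{lemma:Lem2.3}, then restoring the exact volumes with Lemma~\ref{lemma:Lem2.4}. Concretely, I would first apply Lemma~\ref{lemma:Lem2.2} or~\ref{lemma:Lem2.3} to the outer square, according to whether the fractional part $a$ of $\sqrt{n+m}$ lies in $[0,1/2)$ or $[1/2,1)$, producing an integer-sided rectangle $C'$ with $\mu(C')\ge n+m$ and $\rho(\partial C')\le 4\sqrt{n+m}+2$. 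Independently I would apply the same two lemmas to the small square $B$ to obtain an integer-sided rectangle $B_0$ with $\mu(B_0)\ge m$; placing $B_0$ in a corner of $C'$, exactly two of its sides (one horizontal and one vertical) are interior and contribute to $\partial A\cap\partial B$, so the interface length is at most $2\sqrt m+1$.

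The perimeter bound then follows from integrality. Writing $\rho_{DB}=\rho(\partial C')+(\text{interface})$, both summands are integers, hence $\rho_{DB}\in\mathbb Z$. The outer rectangle overshoots $4\sqrt{n+m}$ by at most $2$ and the interface overshoots $2\sqrt m$ by at most $1$, so a priori $\rho_{DB}\le\rho_{cont}(n,m)+3$; but the combined overshoot equals $3$ only if both $\sqrt{n+m}$ and $\sqrt m$ have fractional part exactly $1/2$ (the worst cases of Lemma~\ref{lemma:Lem2.3}), which is impossible since $n+m$ and $m$ are integers and $(k+\tfrac12)^2\notin\mathbb Z$. Hence $\rho_{DB}<\rho_{cont}(n,m)+3$, and as $\rho_{DB}$ is an integer strictly below the integer $\lceil\rho_{cont}(n,m)\rceil+3$, we conclude $\rho_{DB}\le\lceil\rho_{cont}(n,m)\rceil+2$.

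It remains to correct the volumes. I would first trim $B_0$ down to volume exactly $m$ using Lemma~\ref{lemma:Lem2.4}, removing unit cells from the interior corner of $B_0$; this keeps $B'\subset B_0$ a lattice set and does not increase the interface, since deleting a convex corner cell replaces a straight interface edge by a staircase of equal length, while deleting a full row or column only shortens it. In particular $\rho_{DB}$ does not increase. Because $B'$ now has volume exactly $m$ and $\mu(C')\ge n+m$, the region $A:=C'\setminus B'$ satisfies $\mu(A)=\mu(C')-m\ge n$, so I can again invoke Lemma~\ref{lemma:Lem2.4} to carve $A$ down to volume exactly $n$ from the far side of $C'$, away from $B'$, without increasing perimeter. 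Taking $A'$ and $B'$ to be the resulting sets gives $\mu(B')=m$, $\mu(A'\cup B')=n+m=\mu(A\cup B)$, and $\rho_{DB}(A',B')\le\lceil\rho_{cont}(n,m)\rceil+2$.

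The main obstacle is this final volume-correction step. The delicate points are: ensuring the order of operations never forces us to \emph{add} volume (resolved by trimming $B$ first, so that $C'\setminus B'$ automatically carries at least $n$ cells); checking that the area-$m$ lattice set $B'$ genuinely fits in a corner of $C'$, which is where the hypothesis $\alpha\le\alpha_0$ enters, guaranteeing $m\ll n$ and hence $\lceil\sqrt m\rceil\le\lfloor\sqrt{n+m}\rfloor$; and verifying that Lemma~\ref{lemma:Lem2.4}, stated for rectangles, still applies to the L-shaped region $C'\setminus B'$ — here one carves only within the rectangular part of $C'$ disjoint from $B'$, so the column-removal argument transfers verbatim. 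By contrast, the perimeter estimate is essentially immediate once the integrality-plus-strictness observation of the second paragraph is in place.
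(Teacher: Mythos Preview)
Your proposal is correct and follows essentially the same approach as the paper: apply Lemmas~\ref{lemma:Lem2.2}/\ref{lemma:Lem2.3} to the outer square $A\cup B$ and the inner square $B$, bound the total perimeter overshoot by~$3$, invoke integrality to drop to $\lceil\rho_{cont}\rceil+2$, then trim with Lemma~\ref{lemma:Lem2.4}. The only substantive difference is how the integrality step is justified. The paper relies on the standing assumption (made just before Lemma~\ref{lemma:Lem2.2}) that at least one side length is irrational, so $\rho_{cont}\notin\mathbb Z$ and $\lfloor\rho_{cont}+3\rfloor=\lceil\rho_{cont}\rceil+2$; you instead argue the overshoot is \emph{strictly} below~$3$. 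Your route is self-contained and slightly cleaner, though your parenthetical ``the worst cases of Lemma~\ref{lemma:Lem2.3}'' undersells the argument: the outer overshoot also approaches~$2$ in Lemma~\ref{lemma:Lem2.2} as $a\to 0^+$, but is never attained there either (at $a=0$ the overshoot is~$0$, and for $a\in(0,\tfrac12)$ it is $2-4a<2$), so strict inequality still holds. Your volume-correction step and the side issues you flag ($B_0$ fitting in~$C'$, Lemma~\ref{lemma:Lem2.4} on the L-shaped complement) match what the paper does implicitly, with the paper phrasing the second trim as applying Lemma~\ref{lemma:Lem2.4} to the rectangle $A'\cup B'$ while removing cells only from~$A'$.
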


\begin{proof}
The result follows from Lemmas \ref{lemma:Lem2.2}, \ref{lemma:Lem2.3}, and \ref{lemma:Lem2.4}.  We have two options for the value of $a$, (either $a\in[0,0.5)$ or $a\in(0.5,1)$), and likewise have two options for the value of $b$.  This gives a total of four combinations of $a$ and $b$.  

Take, for example, $a\in[0,0.5)$ and $b\in[0,0.5)$; Lemma \ref{lemma:Lem2.2} produces for us two new shapes $A'$ and $B'$ such that the side lengths of $B'$ and $A'\cup B'$ will be, respectively, $\left\lceil\sqrt{m}\right\rceil\cdot\left\lfloor\sqrt{m}\right\rfloor$ and $\left\lceil\sqrt{m+n}\right\rceil\cdot\left\lfloor\sqrt{m+n}\right\rceil$.  Furthermore, we know from the same lemma that $\mu(B')\geq m$ and $\mu(A'\cup B')\geq m+n$.  Referring to the figure above, notice here that when adjusting the shape of $B$, we only need to move the part of $\partial B$ that appears in the closure of $A\cup B$.  This means that the double bubble perimeter can increase by at most $1$.  By adjusting the sides of $A\cup B$, Lemma \ref{lemma:Lem2.2} tells us that the double bubble perimeter can increase by at most $2$.  Therefore, overall we have increased the double bubble perimeter by at most $3$.  Since we started out with a double bubble perimeter $\rho_{DB}(A,B)\notin\mathbb{N}$, and we now have a double bubble perimeter $\rho_{DB}(A',B')\in\mathbb{N}$, and $\rho_{DB}(A',B')\leq\rho_{DB}(A,B)+3$, it follows that $\rho_{DB}(A',B')\leq\left\lfloor\rho_{cont}(A,B)+3\right\rfloor=\left\lceil\rho_{cont}(A,B)\right\rceil+2$.  

There are three more possible combinations for $a$ and $b$.  We next take when $a\in[0,0.5)$ and $b\in(0.5,1)$.  Lemma \ref{lemma:Lem2.2} provides us with a new shape for $A\cup B$ with side lengths $\left\lceil\sqrt{m+n}\right\rceil\cdot\left\lfloor\sqrt{m+n}\right\rfloor$, and Lemma \ref{lemma:Lem2.3} provides us with a new shape for $B$ with side lengths $\left\lceil\sqrt{m}\right\rceil\cdot\left\lceil\sqrt{m}\right\rceil$.  These two lemmas combined tell us that the increase in double bubble perimeter is not more than $2$ in this case, and also that the volumes of both $B$ and $A\cup B$ have not decreased.  

For $a\in(0.5,1)$ and $b\in(0,0.5)$ Lemmas \ref{lemma:Lem2.2} and Lemma \ref{lemma:Lem2.3}  provide two shapes that have side lengths $\left\lceil\sqrt{m+n}\right\rceil\cdot\left\lceil\sqrt{m+n}\right\rceil$ and $\left\lceil\sqrt{m}\right\rceil\cdot\left\lfloor\sqrt{m}\right\rfloor$.  Finally when both $a,b\in(0.5,1)$ we are supplied with two shapes from Lemma \ref{lemma:Lem2.3} that have side lengths $\left\lceil\sqrt{m+n}\right\rceil\cdot\left\lceil\sqrt{m+n}\right\rceil$ and $\left\lceil\sqrt{m}\right\rceil\cdot\left\lceil\sqrt{m}\right\rceil$.  Again, we know that these shapes have not increased the double bubble perimeter by more than $3$, and we know that the volume of the smaller shape is at least $m$ and the joint volume of both shapes is at least $m+n$.  Again, the same argument as above ensures that since we know that we have increased the double bubble perimeter by at most $3$, this means that $\rho_{DB}(A',B')\leq\left\lceil\rho_{cont}(A,B)\right\rceil+2$. 

Applying Lemma \ref{lemma:Lem2.4} to $B'$ first, and at each step that we remove volume from $B'$ adding it to $A'$, we can find two disjoint sets, which by abuse of notation we continue to call $A'$ and $B'$, such that $\mu(B')=m$, and $\mu(A'\cup B')\geq n+m$.  We can then apply Lemma \ref{lemma:Lem2.4} to $A'\cup B'$, being careful to remove volume from $A'$ but not $B'$ until $\mu(A'\cup B')=n+m$.  Since we already know that $\mu(B')=m$, this means that $\mu(A')=n$, as was desired.  The result will look like Figure \ref{fig:Figure6}.

\begin{figure}[H]

\begin{center}
\begin{tikzpicture}[scale=0.6]
\draw (-2,0) -- (6,0);
\draw (0,-2) -- (0,6);

\draw (-0.2,1) -- (0.2,1);
\draw (-0.2,0.5) -- (0.2,0.5);
\draw (-0.2,1.5) -- (0.2,1.5);
\draw (-0.2,2) -- (0.2,2);
\draw (-0.2,2.5) -- (0.2,2.5);
\draw (-0.2,3) -- (0.2,3);
\draw (-0.2,3.5) -- (0.2,3.5);
\draw (-0.2,4) -- (0.2,4);
\draw (-0.2,4.5) -- (0.2,4.5);

\draw (0.5,-0.2) -- (0.5,0.2);
\draw (1,-0.2) -- (1,0.2);
\draw (1.5,-0.2) -- (1.5,0.2);
\draw (2,-0.2) -- (2,0.2);
\draw (2.5,-0.2) -- (2.5,0.2);
\draw (3,-0.2) -- (3,0.2);
\draw (3.5,-0.2) -- (3.5,0.2);
\draw (4,-0.2) -- (4,0.2);
\draw (4.5,-0.2) -- (4.5,0.2);

\draw [blue, thick] (0,1) -- (0,4);
\draw [blue, thick] (0,4) -- (4,4);
\draw [blue, thick] (4,4) -- (4,0);
\draw [blue, thick] (4,0) -- (0.5,0);
\draw [blue, thick] (0.5,0) -- (0.5,1);
\draw [blue, thick] (0.5,1) -- (0,1);

\draw [red, thick] (2.5,4) -- (2.5,3);
\draw [red, thick] (2.5,3) -- (3,3);
\draw [red, thick] (3,3) -- (3,2.5);
\draw [red, thick] (3,2.5) -- (4,2.5);

\draw (2,2) node{$A'$};
\draw (3.4,3.4) node{$B'$};

\end{tikzpicture}

\end{center}
\caption{}
\label{fig:Figure6}
\end{figure}

This concludes all of the possibilities for $\frac{m}{n}\leq\alpha_0$.

\end{proof}

 \subsection{Lattice Imposing Method, $\alpha_0<\alpha\leq0.5$}
Now, we prove that the discrete double bubble minimizer is no more than $\left\lceil\rho_{cont}\right\rceil+2$ when $\frac{m}{n}=\alpha\in(\alpha_0,0.5]$.  For these values of $\alpha$ the continuous minimizer looks like the following: 

\begin{figure}[H]

\begin{center}
\begin{tikzpicture}

\draw[blue,  thin] (5,4) to (9,4);
\draw[blue,  thin] (5,0) to (9,0);
\draw[blue,  thin] (5,0) to (5,4);
\draw[blue,  thin] (9,0) to (9,4);
\draw (7,0.25) node{$\sqrt{n}$};
\draw (5.35,2) node{$\sqrt{n}$};
\draw (7,-0.5) node{$\alpha\in [\frac{688-480\sqrt{2}}{49},.5]$};
\draw[blue,  thin] (9,1) to (10.5,1);
\draw[blue,  thin] (9,3) to (10.5,3);
\draw[blue,  thin] (10.5,1) to (10.5,3);
\draw (10.05,2) node{$\sqrt{2m}$};
\draw (9.80,3.35) node{$\frac{\sqrt{2m}}{2}$};

\end{tikzpicture}
\caption{\label{fig:Figure8}}

\end{center}

\end{figure}

\begin{lemma}
\label{lemma:Lem2.7}
Let $(A,B)\in\Gamma_{\alpha}$ be such that $\mu(A)= n$, $\mu(B)= m$, $\alpha_0<\frac{m}{n}\leq0.5$, $a=\sqrt{n}-\left\lfloor\sqrt{n}\right\rfloor$ and $b=\sqrt{2m}-\left\lfloor\sqrt{2m}\right\rfloor$.  Then we can find $(A',B')$ so that $\mu(A')=n$, $\mu(B')=m$, and $\rho_{DB}(A',B')\leq\left\lceil\rho_{cont}(A,B)\right\rceil+2$.  

\end{lemma}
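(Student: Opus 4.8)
The plan is to reuse the rounding Lemmas \ref{lemma:Lem2.2} and \ref{lemma:Lem2.3} to force the square $A$ (of area $n$) and the rectangle $B$ (of area $m$) from Figure \ref{fig:Figure8} onto the lattice, controlling the double bubble perimeter throughout. Writing the latticized pieces as $A'=p\times q$ and $B'=r\times t$, with $B'$ glued to the right side of $A'$ along a shared edge of length $t\le q$, a direct boundary count gives
\[
\rho_{DB}(A',B')=2p+2q+2r+t,
\]
which at the continuous optimum $p=q=\sqrt n$, $r=\tfrac{\sqrt{2m}}2$, $t=\sqrt{2m}$ recovers $4\sqrt n+2\sqrt{2m}=\rho_{cont}$. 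Since $\rho_{DB}(A',B')\in\mathbb{Z}$ while $\rho_{cont}\notin\mathbb{Z}$ in the nontrivial case, it suffices to produce integers $p,q,r,t$ with $pq\ge n$, $rt\ge m$, $t\le q$ and total overflow $2p+2q+2r+t-\rho_{cont}\le 3$; Lemma \ref{lemma:Lem2.4} then trims $B'$ and $A'$ from their free sides down to the exact volumes $m$ and $n$ without raising the perimeter. Lemmas \ref{lemma:Lem2.2} and \ref{lemma:Lem2.3} already give $2p+2q\le 4\sqrt n+2$, so the whole game is to latticize the rectangle $B$ at a cost of at most $1$, i.e. with $2r+t\le 2\sqrt{2m}+1$.

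The difficulty is that the short side $r$ enters the perimeter with coefficient $2$, so rounding each side of $B$ independently can cost up to $2$. I would avoid this by observing that $B$ is exactly half of a square of area $2m$: round that square (side $\sqrt{2m}$) via Lemma \ref{lemma:Lem2.2} or \ref{lemma:Lem2.3} into an integer rectangle of area $\ge 2m$ one of whose sides is even, and then halve the even side. Concretely, with $k=\lfloor\sqrt{2m}\rfloor$ and $b=\sqrt{2m}-k$, I would set $(r,t)=\big(\tfrac{k+1}2,k\big)$ or $\big(\tfrac k2,k+1\big)$ when $b<\tfrac12$ (according to the parity of $k$), and $(r,t)=\big(\tfrac{k+1}2,k+1\big)$ or $\big(\tfrac{k+2}2,k\big)$ when $b\ge\tfrac12$. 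The area bounds $rt\ge m$ reduce, via the identity $(k+b)^2=2m$ together with integrality, to $k(k+1)\ge 2m$ for $b<\tfrac12$ and to $(k+1)^2\ge 2m$ and $k(k+2)\ge 2m$ for $b\ge\tfrac12$; each follows from comparing $2m=(k+b)^2$ with a nearby perfect square (e.g. $2m<(k+\tfrac12)^2<k(k+1)+1$ forces $2m\le k(k+1)$). The perimeter bound is immediate: $2r+t$ equals $2k+1$ in the first regime and $2k+2$ in the second, and in both cases $2r+t\le 2k+2b+1=2\sqrt{2m}+1$, the second using $b\ge\tfrac12$.

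The main obstacle, and the reason for tracking the two halving options, is the geometric constraint $t\le q$: the bubble $B'$ must fit within the height of $A'$. I would orient $A'$ so that its height is the rounded-up side $q=\lceil\sqrt n\rceil$, and in each case above halve whichever even side keeps the height small, so that $t\le k+1=\lceil\sqrt{2m}\rceil\le\lceil\sqrt n\rceil=q$, using $2m\le n$ and monotonicity of the ceiling. This is tight precisely at $\alpha=\tfrac12$, where $\sqrt n=\sqrt{2m}$ and $B'$ exactly spans the right side of $A'$. Assembling the pieces gives $\rho_{DB}(A',B')\le(4\sqrt n+2)+(2\sqrt{2m}+1)=\rho_{cont}+3$, hence $\rho_{DB}(A',B')\le\lfloor\rho_{cont}+3\rfloor=\lceil\rho_{cont}\rceil+2$, and a final application of Lemma \ref{lemma:Lem2.4} reduces the volumes to exactly $n$ and $m$ from the free outer sides of $A'$ and $B'$, completing the proof.
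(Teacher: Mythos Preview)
Your proof is correct and follows essentially the same route as the paper: latticize $A$ via Lemmas \ref{lemma:Lem2.2}--\ref{lemma:Lem2.3} at cost at most $2$, then split into four cases on the parity of $k=\lfloor\sqrt{2m}\rfloor$ and the size of $b$ to latticize $B$ at cost at most $1$; your choices of $(r,t)$ coincide case by case with the paper's. Your ``half a square of area $2m$'' framing is a clean way to explain why the parity split is natural and why the area bounds reduce to comparing $2m$ with $k(k{+}1)$, $(k{+}1)^2$, or $k(k{+}2)$, and you are slightly more explicit than the paper about the fitting constraint $t\le q=\lceil\sqrt n\rceil$.
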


\begin{proof}

First, when $a\in(0,0.5)$, Lemma \ref{lemma:Lem2.2} gives us a set (a rectangle, in fact) whose side lengths are $\left\lceil\sqrt{n}\right\rceil$ and $\left\lfloor\sqrt{n}\right\rfloor$.  Note here that it is important to increase the side of $A$ that has shared boundary with $B$.  This way if we must increase this side of $B$, we have ensured that no side of $B$ becomes longer than the side of $A$ with which it has shared boundary.  

Now, if $a\in(0.5,1)$, we use Lemma \ref{lemma:Lem2.3} to find a figure whose side lengths are $\left\lceil\sqrt{n}\right\rceil\cdot\left\lceil\sqrt{n}\right\rceil$.  In either of these cases, we have increased the perimeter by at most $2$.  Therefore, if we can show that by adjusting $B$ we increase the double bubble perimeter by at most $1$, we are done.  

The proof depends on whether $\left\lfloor\sqrt{2Y}\right\rfloor$ is even or odd as well as on $b$.  Suppose first that $\left\lfloor\sqrt{2Y}\right\rfloor$ is odd.  Then if $b\in(0,0.5)$, the decimal part of $\frac{\sqrt{2Y}}{2}$ is between $0.5$ and $0.75$.  This means that if we increase the two sides of length $\frac{\sqrt{2Y}}{2}$ to $\left\lceil\frac{\sqrt{2Y}}{2}\right\rceil$, and decrease the sides of length $\sqrt{2Y}$ to $\left\lfloor\sqrt{2Y}\right\rfloor$, we have increased the double bubble perimeter by at most one.  We need to ensure that we have not decreased the volume.  

We have increased the volume by at least $0.25\cdot\left\lfloor\sqrt{2Y}\right\rfloor$ but decreased it by at most $0.5\cdot\frac{\sqrt{2Y}}{2}$.  At first glance things look bleak as it may appear that the lost volume outweighs the volume gained.  However, recall that our original volume was an integer, and the volume after adjusting the side lengths is also an integer.  Therefore, for the volume to have reduced, the difference $0.25\cdot\left\lfloor\sqrt{2Y}\right\rfloor-0.5\cdot\frac{\sqrt{2Y}}{2}=0.25\left(\left\lfloor\sqrt{2Y}\right\rfloor-\sqrt{2Y}\right)$ must be at least one, which, clearly, it is not.  

Let us now take the case when $\left\lfloor\sqrt{2Y}\right\rfloor$ is still odd, but $b\in(0.5,1)$.  Then, the decimal part of $\frac{\sqrt{2Y}}{2}$ is between $0.75$ and $1$.  Then we can simply increase all side lengths to the next integer up, i.e. we increase $\sqrt{2Y}$ to $\left\lceil\sqrt{2Y}\right\rceil$ and increase $\frac{\sqrt{2Y}}{2}$ to $\left\lceil\frac{\sqrt{2Y}}{2}\right\rceil$.  With reference to the figure above, it is easy to see that we have increased the perimeter by at most $1$, which is what we wanted.  

We now examine the cases when $\left\lfloor\sqrt{2Y}\right\rfloor$ is even.  Take first the case when $b\in(0,0.5)$.  Then the decimal part of $\frac{\sqrt{2Y}}{2}$ is between $0$ and $0.25$.  Therefore, we increase the side of length $\sqrt{2Y}$ to $\left\lceil\sqrt{2Y}\right\rceil$ and decrease the sides of length $\frac{\sqrt{2Y}}{2}$ to $\left\lfloor\frac{\sqrt{2Y}}{2}\right\rfloor$.  This means that we have increased the perimeter by at most $1$.  As for the volume, we have increased it by at least $0.5\cdot\left\lfloor\frac{\sqrt{2Y}}{2}\right\rfloor$, whereas we decrease the volume by at most $0.25\cdot\sqrt{2Y}$.  Again, noting that the volumes of both the original figure as well as that of the post-procedure figure are both integers guarantees that we have not lost anything.  

Finally, we are in the last case, which is when $\left\lfloor\sqrt{2Y}\right\rfloor$ is even, and $b\in(0.5,1)$.  In this case, we know that the decimal part of $\frac{\sqrt{2Y}}{2}$ is between $0.25$ and $0.5$.  Here, we reduce the side length $\sqrt{2Y}$ to $\left\lfloor\sqrt{2Y}\right\rfloor$ and increase the two sides of length $\frac{\sqrt{2Y}}{2}$ to $\left\lceil\frac{\sqrt{2Y}}{2}\right\rceil$.  Clearly we have increased the double bubble perimeter by at most $1$.  We have increased the volume by at least $0.5\cdot\left\lfloor\frac{\sqrt{2Y}}{2}\right\rfloor$ and decreased it by at most $1\cdot{\frac{\sqrt{2Y}}{2}}$.  We again rely on the fact that both the original volume and the volume of the figure after our procedure are integers to guarantee that the volume has not decreased.  

This completes all possibilities for the case when $\alpha\in(\alpha_0,0.5]$.
\end{proof}
Having verified all of $\alpha\in(0,0.5]$ we conclude the proof of Theorem \ref{theorem:Thm1} \qed

\section{Proof of Theorem \ref{thm:alpha1} and Corollary \ref{cor:bootstrap}}\label{sec3}

For the final values $\alpha\ge0.5$, there are too many parameters that create too many cases to check using the previous methods, thus we present here another technique to address the remaining values of $\alpha$. Instead, we will show that for $\alpha=1$, and $n$ large enough, we can always find two rectangles whose side lengths are integers, and whose double bubble perimeter is at most the ceiling of the continuous case plus one.  From this, we will argue that for any $\alpha\in[0.5,1)$, there is a discrete configuration whose double bubble perimeter is at most the ceiling of the continuous case plus two.  

\begin{proof}[Proof of Theorem \ref{thm:alpha1}]
Recall that given two volumes, $m,n$ such that $\frac{m}{n}\in[0.5,1]$, the shape that minimizes the continuous double bubble perimeter is of the following form:

\begin{figure}[H]
\begin{tikzpicture}

\draw[blue,  thick] (-4,0) to (4,0);
\draw[blue,  thick] (-4,0) to (-4,4);
\draw[blue,  thick] (-4,4) to (4,4);
\draw[blue,  thick] (4,4) to (4,0);

\draw[blue,  thick] (0,4) to (0,0);

\draw (-2,2) node{$\mu(A)=n$};
\draw (2,2) node{$\mu(B)=m$};

\draw (-2,-0.3) node{$x$};
\draw (2,-0.3) node{$y$};
\draw (-4.25,2) node{$z$};

\end{tikzpicture}
\caption{\label{fig:Figure9}}
\end{figure}

The values of $x,y$, and $z$ are given in Figure \ref{fig:Figure4} as $(x,y,z)=\left(\sqrt{\frac{3n^{2}}{2(n+m)}},\sqrt{\frac{3m^{2}}{2(n+m)}},\sqrt{\frac{2(n+m)}{3}}\right)$, and the double bubble perimeter is $\rdb=2\sqrt{6(n+m)}$.  

If we forget for a moment about minimizing double bubble perimeter and focus on Figure \ref{fig:Figure9}, we can see that its double bubble perimeter is $3z+2(x+y)$.  Let us fix $n=m$, and consider only when $\alpha=1$.  The expression representing the double bubble perimeter becomes $2\sqrt{12n}$.  Suppose we find $x,y,z\in\mathbb{Z}^+$ such that $xz\geq n$, $yz\geq n$, and $3z+2(x+y)\leq\left\lceil2\sqrt{6(n+m)}\right\rceil+1$.  Then we will have completed the proof of Theorem \ref{thm:alpha1}, for, although $xz,yz$ may be too large originally, by Lemma \ref{lemma:Lem2.4} we can reduce the volumes in such a way as to not increase double bubble perimeter.  

Notice that the three dimensional set formed by the inequalities $xz\geq n$ and $yz\geq n$ is convex.  Let $\mathfs{C}=\{(x,y,z)\in\mathbb{R}^3|xz\geq n,yz\geq n\}$.  Any point $(x_0,y_0,z_0)$ in $\mathfs{C}$ represents a configuration of the form presented in Figure \ref{fig:Figure9}, although it may be the case that the volumes are too large.  What we wish to show is that the number $\min\{3z+2(x+y)|(x,y,z)\in\mathfs{C}, x,y,z\in\mathbb{Z}^+\}$ is at most $\left\lceil2\sqrt{6(n+m)}\right\rceil+1$, i.e. at most one more than the ceiling of the continuous case double bubble perimeter.  Note that this minimum clearly exists as there are certainly positive integers $x,y,z$ that satisfy the inequalities defining $\mathfs{C}$.  

Consider the planes $\mathfs{P}_q$ of the form $3z+2(x+y)=\Big\lceil2\sqrt{12n}\Big\rceil+q$, where $q\in\mathbb{Z}^+$, and let $\tilde{\mathfs{P}}_0$ be the plane given by $3z+2(x+y)=0$.  For given $q$, a point that is in the intersection of $\mathfs{P}_q$ and $\mathfs{C}$ represents a valid configuration like that in Figure \ref{fig:Figure9} and has double bubble perimeter $\Big\lceil2\sqrt{12n}\Big\rceil+q$.  If there is a point in $\mathfs{P}_q\cap\mathbb{Z}^3\cap\mathfs{C}$, then that is a valid configuration that also happens to have integer lengths for all of its sides.  We need to find such a point when $q=1$.  

Looking at the equation for the plane $\mathfs{P}_q$, we notice that the vector $(2,2,3)$ is orthogonal to the plane.  It is easily seen, by taking the dot product, that both the vectors $v_1=(0,3,-2)$ and $v_2=(1,-1,0)$ are orthogonal to $(2,2,3)$. Since $v_1$ and $v_2$ are linearly independent they form a basis for $\tilde{\mathfs{P}}_0$. Moreover since $v_1$ and $v_2$ have the smallest Euclidean norm in $\tilde{\mathfs{P}}_0\cap\mathbb{Z}^3$, they span this lattice.  

Now we are going to project the affine lattice $\mathfs{P}_1\cap\mathbb{Z}^3$ onto the $xy$-plane.  This will result in a lattice, call it $\mathfs{Z}$, provided $\mathfs{P}_1\cap\mathbb{Z}^3\neq\emptyset$ (which it isn't, as we shall see).  Then we will project $\mathfs{P}_1\cap\{(x,y,z)|xz=n\}$, and $\mathfs{P}_1\cap\{(x,y,z)|yz=n\}$ onto the $xy$-plane.  These will create two curves that intersect each other in such a way as to create one convex, bounded set in $\mathbb{R}^2$; let's denote this convex and bounded set by $\mathfs{R}$.  We will then show that for $n$ large enough, we can fit a parallelogram inside $\mathfs{R}$ that is big enough to guarantee it contains a lattice point from $\mathfs{Z}$.  This point guarantees that we have a point in $\mathfs{P}_1\cap\mathbb{Z}^3\cap\mathfs{C}$.  Finally, this means that we have a configuration representing two sets whose volumes are at least $n$, and whose double bubble perimeter is at most $\Big\lceil2\sqrt{12n}\Big\rceil+1$.  

Let us first convince ourselves that $\mathfs{P}_1\cap\mathbb{Z}^3\neq\emptyset$.  The equation representing $\mathfs{P}_1$ is $3z+2(x+y)=\left\lceil2\sqrt{12n}\right\rceil+1$, which gives $z=\frac{\left\lceil2\sqrt{12n}\right\rceil-2(x+y)+1}{3}$.  Since we are dividing by $3$, and the numerator is an integer when $x$ and $y$ are, and $\mathbb{Z}/3\mathbb{Z}$ has three equivalence classes, the integers $\left\lceil2\sqrt{12n}\right\rceil-2(x+y)+1$ when $(x,y)=(0,0),(0,1),(0,2)$ are all in different equivalence classes, and therefore one of them must be an integer that is divisible by $3$, which makes $z$ an integer as well as $x$ and $y$.  This means that $\mathfs{P}_1\cap\mathbb{Z}^3\neq\emptyset$, as we wished.  Since we know that $\mathfs{P}_1\cap\mathbb{Z}^3\neq\emptyset$, we can project its elements onto the $xy$-plane.  

The above argument tells us more; it tells us that $\mathfs{P}_1$ is $\tilde{\mathfs{P}}_0$ plus some vector —either $\left(0,0,\frac{\left\lceil2\sqrt{12n}\right\rceil+1}{3}\right)$, $\left(0,1,\frac{\left\lceil2\sqrt{12n}\right\rceil-1}{3}\right)$, or $\left(0,2,\frac{\left\lceil2\sqrt{12n}\right\rceil-3}{3}\right)$—which we call a shift vector.  Thus, despite the variability of $n$, we know that the projection of the lattice $\mathbb{Z}^3\cap\mathfs{P}_1$ onto the $xy$-plane can be located by simply adding vectors in the projection of $\tilde{\mathfs{P}}_0$ to the projection of one of these three shift vectors, i.e. regardless of the value of $n$, the lattice we are interested in is one of three options.  

Now, let us examine the projections of $\mathfs{P}_1\cap\{(x,y,z)|xz=n\}$ and $\mathfs{P}_1\cap\{(x,y,z)|yz=n\}$ onto the $xy$-plane.  We already saw that the equation for $\mathfs{P}_1$ can be given by $z=\frac{\left\lceil2\sqrt{12n}\right\rceil-2(x+y)+1}{3}$.  Similarly, the other surfaces are given by the equations $z=\frac{n}{x}$, and $z=\frac{n}{y}$.  Thus the intersection is given by the equalities

\begin{equation}\label{eq:curves}
\begin{aligned}
&\frac{n}{x}=\frac{\left\lceil2\sqrt{12n}\right\rceil-2(x+y)+1}{3},\\
&\frac{n}{y}=\frac{\left\lceil2\sqrt{12n}\right\rceil-2(x+y)+1}{3}.
\end{aligned}
\end{equation}

We can then project both of these equations as well as $\mathfs{P}_1\cap\mathbb{Z}^3$ onto the $xy$-plane to get the right image in Figure \ref{fig:helper11111}. 
The projection of $\mathfs{P}_1\cap\mathbb{Z}^3$ onto the $xy$-plane depends on $n$, and is spanned by a new basis $\{(1,-1),(0,3)\}$ plus a constant shift.   The constant shift is given by the projection of one of $\left(0,0,\frac{\left\lceil2\sqrt{12n}\right\rceil+1}{3}\right)$, $\left(0,1,\frac{\left\lceil2\sqrt{12n}\right\rceil-1}{3}\right)$, or $\left(0,2,\frac{\left\lceil2\sqrt{12n}\right\rceil-3}{3}\right)$ onto the $xy$-plane, i.e. the shift will be given by one of $(0,0)$, $(0,1)$, or $(0,2)$.  Note that the shift will not affect our analysis as we just show that we can embed a large enough shape between the projected curves, such that an integer lattice point must be contained inside it. Adding twice the first vector to the second gives the basis $\{(1,-1),(2,1)\}$.  It is easier to use a linear transformation so that the projected lattice becomes a shift of $\mathbb{Z}^2$ (see Figure \ref{fig:helper11111}), and the basis vectors become $\{(1,0),(0,1)\}$.  The matrix $\mathcal{L}$ that takes our basis onto the standard basis and its inverse are:

\begin{center}
$\mathcal{L}=\begin{pmatrix}1/3&&1/3\\1/3&&-2/3\end{pmatrix},\mathcal{L}^{-1}=\begin{pmatrix}2&&1\\1&&-1\end{pmatrix}$
\end{center}

The matrix $\mathcal{L}$ applied to the three possible shift vectors results in new shift vectors of $(0,0)$, $(1/3,-2/3)$, and $(2/3,-4/3)$.  Up to now, we have found a lattice $\mathfs{P}_1\cap\mathbb{Z}^3$ that is a vector shift away from the lattice $\tilde{\mathfs{P}}_0\cap\mathbb{Z}^3$ — there are only three possible such vector shifts — and projected this lattice onto the $xy$-plane.  To this projected lattice, we apply a linear transformation so that it will be a shifted copy of $\mathbb{Z}^2$.  

We apply the transformation $\mathcal{L}^{-1}$ to the curves given in Equation \ref{eq:curves}, which results in the new equations given by:

\begin{equation}\label{eq:TransformedCurves}
\begin{aligned}
&\frac{n}{2x-y}=\frac{\left\lceil2\sqrt{12n}\right\rceil-2(3x)+1}{3},\\
&\frac{n}{x+y}=\frac{\left\lceil2\sqrt{12n}\right\rceil-2(3x)+1}{3}.
\end{aligned}
\end{equation}

If we can find a point in our shifted lattice that lies in the bounded, convex region formed by these two curves, then we will have found a point that corresponds to a configuration of two sets whose volumes are both at least $n$, and whose double bubble perimeter is at most one more than the ceiling of the continuous case double bubble perimeter for two sets of volume $n$.  To find this point, we will show that we can fit a parallelogram inside this region of sufficient size as to guarantee that, no matter which of the three possible shifts is the correct one, there must be a shifted lattice point inside the parallelogram,.  We will then show that the region formed by these two curves increases monotonically with $n$, and therefore, if our parallelogram of sufficient size fits inside the region for $n'$, it must also fit inside the region for all $n\geq n'$.  

Before we proceed further, we would like to note that the corners of our region can be seen to lie on $y=.5x$ with this simple arguement. Namely the corners occur at coordinates where both equations in \ref{eq:TransformedCurves} hold. Therefore we have $\frac{n}{2x-y}=\frac{n}{x+y}$, which immediattely rearranges into $y=.5x$.

Solving Equation \ref{eq:TransformedCurves} for $y$ gives:

\begin{equation}\label{eq:FunctionOfCurves}
\begin{aligned}
&y_1=\frac{2x(\left\lceil2\sqrt{12n}\right\rceil-6x+1)-3n}{\left\lceil2\sqrt{12n}\right\rceil-6x+1},\\
&y_2=\frac{x(-\left\lceil2\sqrt{12n}\right\rceil+6x-1)+3n}{\left\lceil2\sqrt{12n}\right\rceil-6x+1}.
\end{aligned}
\end{equation}

The equation representing $y_1$ forms the top part of the region in question, and $y_2$ forms the bottom.  
In Figure \ref{fig:helper11111} one can observe the curves and possible integer lattice shifts, before and after the linear transformation $\mathcal{L}$. 

\begin{figure}[H]
\includegraphics[width=6 cm]{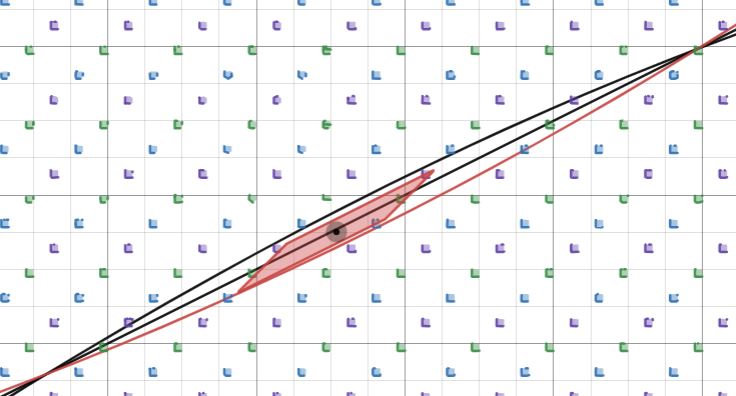}
\includegraphics[width=5 cm]{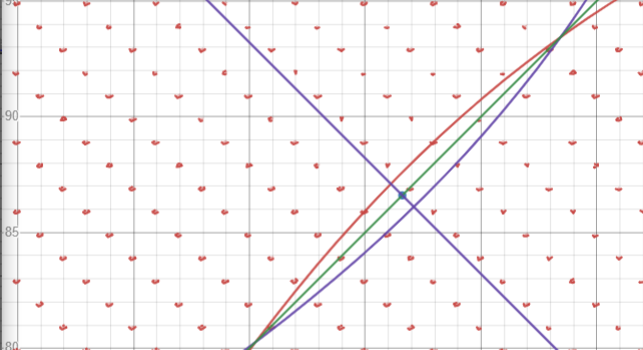}
\caption{The untransformed (right) and transformed (left) problems}
\label{fig:helper11111}
\end{figure}

Note that by equating the LHS of the two equations in \eqref{eq:TransformedCurves}, the intersection of the curves $y_1(x)$ and $y_2(x)$ is on the line $y=x/2$ for all $n$ (see Figure \ref{fig:lineintersection} for a graphical representation). 
\begin{figure}[H]
\includegraphics[width=6cm]{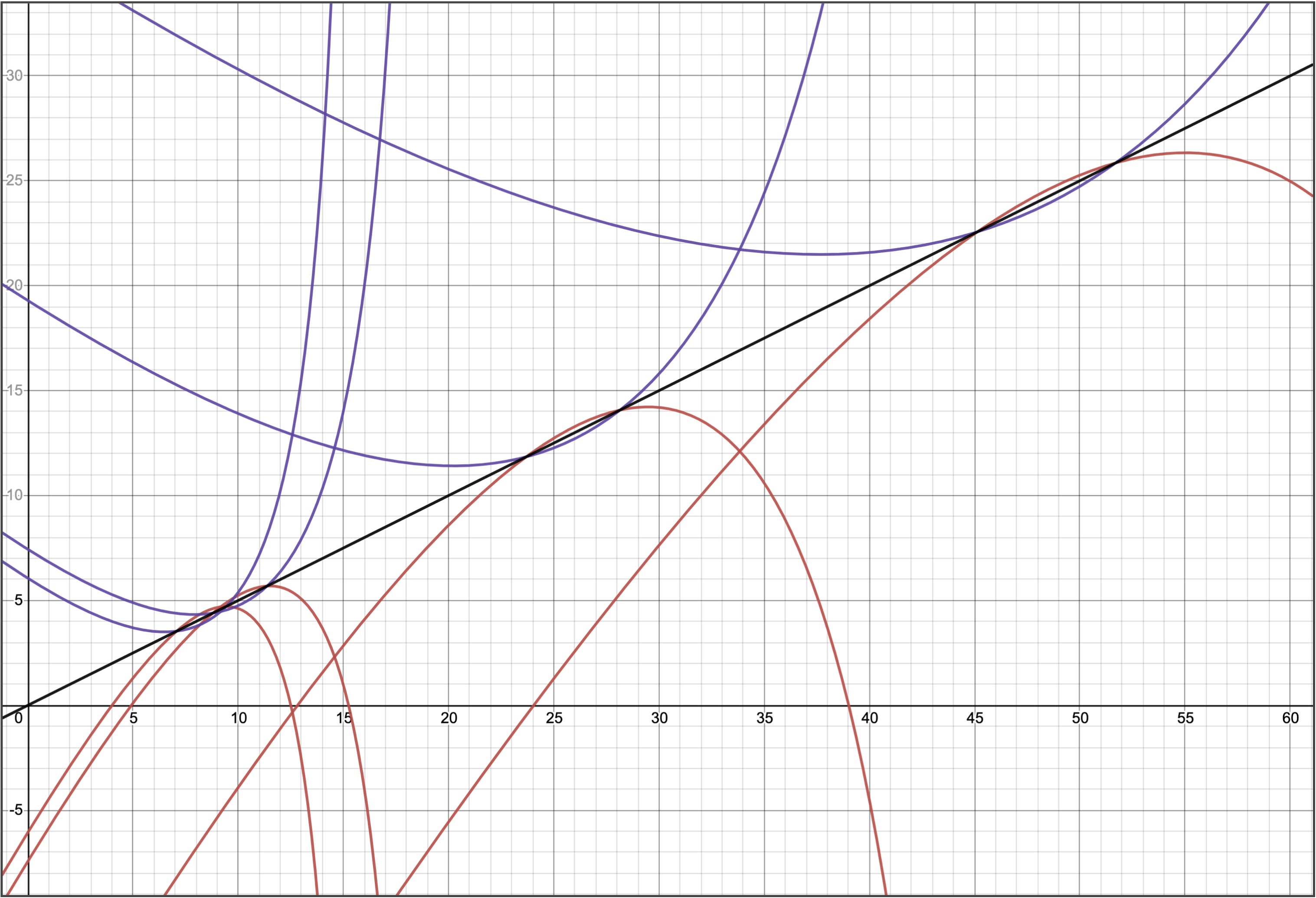}
\caption{Translation along the line $y=x/2$}
\label{fig:lineintersection}
\end{figure}

Thus if we translate the curves along this line, and fit a proper parallelogram centered along the line, then we have found an integer point along one of the possible shifted lattices.

We shift $x\to x+\frac{2+\sqrt{12n}}{6}$ and $y\to y+\frac{2+\sqrt{12n}}{12}$. Moreover for ease of calculations we will work with a smaller region which is obtained by dropping the ceiling function. We obtain the new curves:
\begin{equation}
\begin{aligned}
&\tilde{y}_{1}=\frac{\sqrt{3n}(1+2x)-1-10x-24x^2}{4\sqrt{3n}-12x-2}\\
&\tilde{y}_{2}=\frac{\sqrt{3n}(-1+2x)+1+8x+12x^2}{4\sqrt{3n}-12x-2}.
\end{aligned}
\end{equation}


\begin{figure}[H]
\includegraphics[width=6cm]{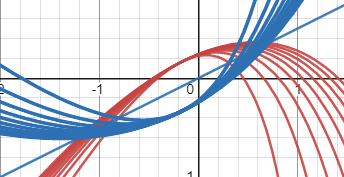}
\caption{Increasing region size with $n$}
\label{fig:helper11}
\end{figure}

Now we want to show that for $N\ge n$ the region between the curves $\tilde{y}_{1}$ and $\tilde{y}_{2}$ for $N$ contains the region for $n$. Note that the corners of the region corresponding to $n$ lie at $x=\frac{\pm\sqrt{1+8\sqrt{3n}}-3}{12}$, thus the corners indeed monotonically grow. Now we must check whether within the smaller region  $\tilde{y}_{1}(N,x)\ge \tilde{y}_{1}(n,x)$ and  $\tilde{y}_{2}(N,x)\le \tilde{y}_{2}(n,x)$. Looking for what values of $\{N,n,x\}$ the above conditions hold.
We find that, given $N\ge n\ge 1$,
$\tilde{y}_{1}(N,x)\ge \tilde{y}_{1}(n,x)$ for $x\in (-\infty,\sqrt{\frac{n}{3}}-\frac{1}{6})$. The same bounds hold for $\tilde{y}_{2}(N,x)\le \tilde{y}_{2}(n,x)$. 
The right corner of the small region for $n$ is $\frac{\sqrt{1+8\sqrt{3n}}-3}{12}$. 
Indeed for all $n\ge 1$, $\frac{\sqrt{1+8\sqrt{3n}}-3}{12}\le \sqrt{\frac{n}{3}}-\frac{1}{6}$. Therefore the desired conditions on $\tilde{y_{1}}$ and $\tilde{y_{2}}$ hold.

Thus if we show that some parallelogram is contained in a region for some large $n_0$ then it would be contained in the region for any $n>n_0$. For $n=6000$ one can show that there is a translation of the parallelogram spanned by $\left(\frac{2}{3},\frac{2}{3}\right)$ and $(2,1)$ centered on the line $y=x/2$ which is contained in the region between the curves $\tilde{y}_1$ and $\tilde{y}_2$. The geometry of this parallelogram was chosen in such a way as for any translation along the line $y=x/2$, the parallelogram must intersect at least one point of any of the possible lattice shifts (See Figure \ref{fig:helper11111} for an illustration of this). 

Note that by show above, we mean that we place the parralellogram visually inside the region using desmos, and know precisely the location of the four corners of the parralellogram in $(x,y)$. We can then calculate the points directly below, above, left, and right of these corners, and have containment.
\end{proof}

Next we show how one uses Theorem \ref{thm:alpha1} to prove Corollary \ref{cor:bootstrap}.
\begin{proof}[Proof of Corollary \ref{cor:bootstrap}]
We now show that if $n$ is large enough, and $m/n=\alpha\in[0.5,1]$, then the discrete double bubble perimeter for the volumes $m$, and $n$ is at most the ceiling of the continuous case double bubble perimeter plus two, which in this case would be $\left\lceil2\sqrt{6(n+m)}\right\rceil+2$.  

Let $t=\frac{n-m}{2}$.  Then the volumes $n-t$ and $m+t$ are the same.  For these two volumes, the continuous double bubble perimeter is $2\sqrt{12(n-t)}=2\sqrt{12(m+n)/2}=2\sqrt{6(n+m)}$, which is the same as the continuous double bubble perimeter for the volumes $m$ and $n$.  The argument above for $\alpha=1$ tells us that for $n$ large enough, there is a discrete configuration of the form in Figure \ref{fig:Figure9} with double bubble perimeter at most $\left\lceil2\sqrt{12(n-t)}\right\rceil+1=\left\lceil2\sqrt{12(n+m)/2}\right\rceil+1=\left\lceil2\sqrt{6(n+m)}\right\rceil+1$, and with both volumes at least $n-t=m+t$, i.e. the combined volume of the two sets is at least $2(n-t)=n+m$.  We can move the center line in such a way that one of the sets has volume at least $n$, and the other has volume at least $m$.  The problem at this point is that the vertical line $x'=L$ that passes through this center line may not be an integer, where $x'$ represents the horizontal axis.  The Figure \ref{fig:Figure10} demonstrates the situation:

\begin{figure}[H]
\begin{tikzpicture}[scale=0.6]

\draw[blue,  thick] (-4,0) to (4,0);
\draw[blue,  thick] (-4,0) to (-4,4);
\draw[blue,  thick] (-4,4) to (4,4);
\draw[blue,  thick] (4,4) to (4,0);

\draw[blue,  thick] (1,4) to (1,0);

\draw[blue,  dashed] (1,5) to (1,-1);
\draw (1,-1.3) node{$x'=L$};

\draw (-1.4,2) node{$\mu(A)\geq n$};
\draw (2.55,2) node{$\mu(B)\geq m$};

\draw (-1.6,-0.3) node{$x$};
\draw (2.4,-0.3) node{$y$};
\draw (-4.25,2) node{$z$};

\end{tikzpicture}
\caption{\label{fig:Figure10}}
\end{figure}

There are several ways of completing the construction so that the volumes are correct, the figures are both discrete shapes, and without increasing double bubble perimeter.  One way is to move the middle vertical line through which $x'=L$ passes to the right until it is positioned so that the vertical line $x'=\left\lceil L\right\rceil$.  If the volumes of the two figures are still at least $n$ and at least $m$, then we can reduce the volumes of both shapes via the same process given in Lemma \ref{lemma:Lem2.4} until they are both correct.  If, however, we have decreased the volume of $B$ in the above figure so that it is less than $m$, we can take volume from the bottom right corner of $A$ and add it to $B$ to create Figure \ref{fig:Figure11}:  

\begin{figure}[H]
\begin{tikzpicture}[scale=0.6]

\draw[blue,  thick] (-4,0) to (4,0);
\draw[blue,  thick] (-4,0) to (-4,4);
\draw[blue,  thick] (-4,4) to (4,4);
\draw[blue,  thick] (4,4) to (4,0);

\draw[blue,  thick] (1,4) to (1,1.5);
\draw[blue,  thick] (1,1.5) to (0.5,1.5);
\draw[blue,  thick] (0.5,1.5) to (0.5,0);

\draw[blue,  dashed] (1,5) to (1,-1);
\draw (1,-1.3) node{$x=\left\lceil L\right\rceil$};

\draw (-1.4,2) node{$\mu(A)\geq n$};
\draw (2.6,2) node{$\mu(B)=m$};

\draw (-2,-0.3) node{$x$};
\draw (2,-0.3) node{$y$};
\draw (-4.25,2) node{$z$};

\end{tikzpicture}
\caption{\label{fig:Figure11}}
\end{figure}

This procedure has added one to the double bubble perimeter.  However, since we started with a configuration with double bubble perimeter at most $\left\lceil2\sqrt{6(m+n)}\right\rceil+1$, we now have a configuration with double bubble perimeter at most $\left\lceil2\sqrt{6(m+n)}\right\rceil+2$.  Using Lemma \ref{lemma:Lem2.4} we can reduce the volume of $A$ to achieve $\mu(A)=n$ without increasing the double bubble perimeter.

\end{proof}


\appendix{}
\bibliographystyle{amsplain}
\bibliography{ri}

\providecommand{\bysame}{\leavevmode\hbox to3em{\hrulefill}\thinspace}
\providecommand{\MR}{\relax\ifhmode\unskip\space\fi MR }
\providecommand{\MRhref}[2]{%
  \href{http://www.ams.org/mathscinet-getitem?mr=#1}{#2}
}
\providecommand{\href}[2]{#2}
\begin{thebibliography}{10}

\bibitem{alexander1990wulff}
K.~Alexander, J.T. Chayes, and L.~Chayes, \emph{The wulff construction and
  asymptotics of the finite cluster distribution for two-dimensional bernoulli
  percolation}, Communications in mathematical physics \textbf{131} (1990),
  no.~1, 1--50.

\bibitem{alonso1996three}
L.~Alonso and R.~Cerf, \emph{The three dimensional polyominoes of minimal
  area}, The Electronic Journal of Combinatorics \textbf{3} (1996), no.~1, R27.

\bibitem{biskup2015isoperimetry}
M.~Biskup, O.~Louidor, E.~B. Procaccia, and R.~Rosenthal, \emph{Isoperimetry in
  two-dimensional percolation}, Communications on Pure and Applied Mathematics
  \textbf{68} (2015), no.~9, 1483--1531.

\bibitem{bodineau2000rigorous}
T.~Bodineau, D.~Ioffe, and Y.~Velenik, \emph{Rigorous probabilistic analysis of
  equilibrium crystal shapes}, Journal of Mathematical Physics \textbf{41}
  (2000), no.~3, 1033--1098.

\bibitem{cerf2006wulff}
R.~Cerf, \emph{The wulff crystal in ising and percolation models: Ecole
  d'et{\'e} de probabilit{\'e}s de saint-flour xxxiv-2004}, Springer, 2006.

\bibitem{duncan2020elementary}
P.~Duncan, R.~O'Dwyer, and E.~B. Procaccia, \emph{An elementary proof for the
  double bubble problem in $\ell^{1} 1$ norm}, arXiv preprint arXiv:2008.07767
  (2020).

\bibitem{foisy1993standard}
J.~Foisy, Manuel Alfaro~G., J.~Brock, N.~Hodges, and J.~Zimba, \emph{The
  standard double soap bubble in r2 uniquely minimizes perimeter}, Pacific
  journal of mathematics \textbf{159} (1993), no.~1, 47--59.

\bibitem{friedrich2021double}
M.~Friedrich, W.~G{\'o}rny, and U.~Stefanelli, \emph{The double-bubble problem
  on the square lattice}, arXiv preprint arXiv:2109.01697 (2021).

\bibitem{doublebubbleconj}
M.~Hutchings, F.~Morgan, M.~Ritor{\'e}, and A.~Ros, \emph{Proof of the double
  bubble conjecture}, Annals of Mathematics \textbf{155} (2002), no.~2,
  459--489.

\bibitem{milman2018gaussian}
E.~Milman and J.~Neeman, \emph{The gaussian double-bubble conjecture}, arXiv
  preprint arXiv:1801.09296 (2018).

\bibitem{morgan1998wulff}
F.~Morgan, C.~French, and S.~Greenleaf, \emph{Wulff clusters in $r^2$}, The
  Journal of Geometric Analysis \textbf{8} (1998), no.~1, 97.

\bibitem{procaccia2012concentration}
E.~B. Procaccia and R.~Rosenthal, \emph{Concentration estimates for the
  isoperimetric constant of the supercritical percolation cluster}, Electronic
  Communications in Probability \textbf{17} (2012), 1--11.

\bibitem{procaccia2016quenched}
E.~B. Procaccia, R.~Rosenthal, and A.~Sapozhnikov, \emph{Quenched invariance
  principle for simple random walk on clusters in correlated percolation
  models}, Probability theory and related fields \textbf{166} (2016), no.~3,
  619--657.

\end{thebibliography}

\end{document}